\numberwithin{equation}{section}
\newcommand{\di}{\displaystyle}
\newcommand{\R}{{\mathbb R}}
\newcommand{\intr}{\int\limits_{\R^N}}
\newcommand{\ia}{I_\alpha}
\newcommand{\mj}{{\mathcal J}}
\newcommand{\me}{{\mathcal E}}
\theoremstyle{plain}
\newtheorem{proposition}{Proposition}[section]
\newtheorem{cor}{Corollary}
\newtheorem{theorem}{Theorem}[section]
\newtheorem{lemma}{Lemma}[section]
\theoremstyle{definition}
\begin{document}

\title{\vskip-0.3in On a class of mixed Choquard-Schr\"odinger-Poisson systems}

\author{Marius Ghergu\footnote{School of Mathematics and Statistics,
    University College Dublin, Belfield, Dublin 4, Ireland; {\tt
      marius.ghergu@ucd.ie}}
      $\;\;$       and $\;$
Gurpreet Singh\footnote{School of Mathematics and Statistics,
    University College Dublin, Belfield, Dublin 4, Ireland; {\tt gurpreet.singh@ucdconnect.ie}} 
}

\maketitle

\begin{abstract}
We study the system
$$
\left\{
\begin{aligned}
-\Delta u+u+K(x) \phi |u|^{q-2}u&=(I_\alpha*|u|^p)|u|^{p-2}u &&\mbox{ in }\R^N,\\
-\Delta \phi&=K(x)|u|^q&&\mbox{ in }\R^N,
\end{aligned}
\right.
$$
where $N\geq 3$, $\alpha\in (0,N)$, $p,q>1$ and $K\geq 0$. Using a Pohozaev type identity we first derive conditions in terms of $p,q,N,\alpha$ and $K$ for which no solutions exist. Next, we discuss the existence of a ground state solution by using a variational approach.

\medskip

\noindent {\it Keywords}: Choquard equation; Schr\"odinger-Poisson equation; Pohozaev identity; ground state solution

\noindent{MSC 2010:} 35J60; 35Q40; 35J20
\end{abstract}


\section{Introduction}\label{sec1}
In this paper we are concerned with the following system 
\begin{equation}\label{sp}
\left\{
\begin{aligned}
-\Delta u+u+K(x) \phi |u|^{q-2}u&=(I_\alpha*|u|^p)|u|^{p-2}u &&\mbox{ in }\R^N,\\
-\Delta \phi&=K(x)|u|^q&&\mbox{ in }\R^N,
\end{aligned}
\right.
\end{equation}
where $p,q>1$ are real numbers and $K\geq 0$ satisfies some more properties as we shall state below. Here $I_\alpha:\R^N\to \R$ is the {\it Riesz potential} of order $\alpha\in (0,N)$, $N\geq 3$,  given by
\begin{equation}\label{rieszdef}
I_\alpha(x)=\frac{A_\alpha}{|x|^{N-\alpha}}\,,\quad \mbox{ with }\; A_\alpha=\frac{\Gamma\big(\frac{N-\alpha}{2}\big)}{\Gamma(\frac{\alpha}{2}\big) \pi^{N/2}2^\alpha}.
\end{equation}

When $K\equiv 0$, system \eqref{sp} reduces to the single equation
\begin{equation}\label{gstate}
-\Delta u+u=(I_\alpha*|u|^p)|u|^{p-2}u\quad \mbox{ in }\R^N
\end{equation}
which bears the name {\it Choquard} or {\it Choquard-Pekar equation}.

For $N=3$, $p=\alpha=2$, equation \eqref{gstate} was introduced in 1954  by S.I. Pekar \cite{P1954} as a
model in quantum theory of a Polaron at rest (see also
\cite{DA2010}). In 1976, P. Choquard  used \eqref{gstate} in a certain approximation to Hartree-Fock theory of one component plasma (see \cite{L1976}). In 1996, equation \eqref{gstate} appears in a different context, being employed by R. Penrose \cite{P1996} as a model of
self-gravitating matter (see, e.g., \cite{J1995,MPT1998}) and it is
known in this context as the {\it Schr\"odinger-Newton equation}.

If $u$ solves \eqref{gstate}, then the function $\psi$ defined by $\psi(t,x)=e^{it}u(x)$ is a solitary wave of the focussing time dependent Hartree-Fock equation
$$
i\psi_t+\Delta\psi+(\ia*|\psi|^p)|\psi|^{p-2}\psi=0\quad\mbox{ in }\R_+\times \R^N.
$$
The Choquard equation \eqref{gstate} has been investigated
for a few decades by variational methods starting with the pioneering
works of E.H. Lieb \cite{L1976} and P.-L. Lions \cite{Lions1980,Lions1984}. More
recently, new and improved techniques have been devised to deal with
various forms of \eqref{gstate} (see, e.g.,
\cite{AFM2016,MV2013,MV2015a,MV2015b,MV2015c,WW2009} and the references
therein). In \cite{MV2013} existence, regularity, positivity, asymptotic behavior and radial symmetry of solutions to \eqref{sp} is discussed for optimal range of parameters. We also mention here the works \cite{CL2016,DSS2015} where the fractional version of \eqref{gstate} is considered. For a nonvariational approach to Choquard equation the reader may consult \cite{GT2016,MZ2010,MV2013b}.

Back to \eqref{sp}, we should point out that since for all $\varphi\in C^\infty_0(\R^N)$, $\ia*\varphi\to \varphi$ as $\alpha\to 0$, the system
 \begin{equation}\label{sp0}
\left\{
\begin{aligned}
-\Delta u+u+K(x) \phi |u|^{q-2}u&=|u|^{2p-2}u &&\mbox{ in }\R^N,\\
-\Delta \phi&=K(x)|u|^q&&\mbox{ in }\R^N,
\end{aligned}
\right.
\end{equation}
may be seen as a formal limit of \eqref{sp} when $\alpha\to 0$. 
The nonlocal nonlinear Schr\"odinger equation
$$
i\psi_t+\Delta\psi+V_{ext}(x)\psi+(I_2*|\psi|^2)\psi-|\psi|^{p-2}\psi=0\,, \quad (t,x)\in \R_+\times \R^3
$$
is used as an approximation to Hartree-Fock model of a quantum many-body system of electrons under the presence of an external potential $V_{ext}$ (see \cite{LL2005}). In such a setting, \eqref{sp0} and its stationary counterpart bear the name of Schr\"odinger-Poisson-Slater \cite{BLS2003}, Schr\"odinger-Poisson-$X_\alpha$ \cite{BMS2003,M2001}, or Maxwell-Schr\"odinger-Poisson \cite{BF1998,CDSO2013} equations. The convolution term in \eqref{sp0} represents the Coulombic repulsion between the electrons. The local term $|u|^{2p-2}u$ was introduced by Slater \cite{S1951} as a local approximation of the exchange potential in the Hartree-Fock model \cite{BLS2003,M2001}.
\medskip

\noindent{\bf Notations.}
Throughout in this paper we use the following notations.

\begin{itemize}
\item $H^1(\R^N)$ denotes the standard Sobolev space endowed with the usual norm
$$
\|u\|^2=\intr\big(|\nabla u|^2+|u|^2\big)dx.
$$
We shall denote by $\langle\cdot,\cdot\rangle$ the duality pairing between $H^1(\R^N)$ and its dual $H^{-1}(\R^N)$.
\item ${\mathcal D}^{1,2}(\R^N)$ is the Hilbert space 
$$
{\mathcal D}^{1,2}(\R^N)=\{u\in L^{2^*}(\R^N):\, |\nabla u|\in L^2(\R^N)\}
$$
endowed with the standard norm
$$
\|u\|_{{\mathcal D}^{1,2}}^2=\intr |\nabla u|^2 dx.
$$
and the associated scalar product 
$$
\big(u,v\big)_{{\mathcal D}^{1,2}}=\intr \nabla u\cdot \nabla v.
$$
\item $L^s(\R^N)$ is the usual Lebesgue space in $\R^N$ of order $s\in [1,\infty]$ whose norm will be denoted by $\|\cdot \|_s$.
\end{itemize}

\section{Main Results}
Our first result provides sufficient conditions for the nonexistence of solutions to \eqref{sp}. 
\begin{theorem}\label{nonex}
Assume $K\in C^1(\R^N)$, $K\geq 0$. If one of the following holds
\begin{enumerate}
\item[(i)] $x\cdot\nabla K(x)+\gamma K(x)\geq 0$ in $\R^N$ for some $\gamma\in (-\infty,\frac{N+2}{2})$ and
\begin{equation}\label{nex1}
p\geq\frac{N+\alpha}{N-2}\quad\mbox{ and }\quad \frac{N+\alpha}{p}\leq \frac{N+2-2\gamma}{q};
\end{equation}
\item[(ii)] $x\cdot\nabla K(x)+\gamma K(x)\leq 0$ in $\R^N$ for some $\gamma\in \R$ and
\begin{equation}\label{nex2}
p\leq\frac{N+\alpha}{N}\quad\mbox{ and }\quad \frac{N+\alpha}{p}\geq \frac{N+2-2\gamma}{q};
\end{equation}
\end{enumerate}
then, the only solution $(u,\phi)$  of \eqref{sp} that satisfies 
\begin{equation}\label{reg1}
u\in H^1(\R^N)\cap L^{\frac{2Np}{N+\alpha}}(\R^N)\,,\quad
\phi\in H^1(\R^N)
\end{equation} 
and 
\begin{equation}\label{reg2}
K(x) |u|^q\in L^{\frac{2N}{N+2}}(\R^N)\,,\quad |\nabla u|\in H^{1}_{loc}(\R^N)\cap L^{\frac{2Np}{N+\alpha}}_{loc}(\R^N)
\end{equation}
is $u\equiv \phi\equiv 0$.
\end{theorem}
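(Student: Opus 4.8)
The plan is to derive three integral identities for a solution $(u,\phi)$ of \eqref{sp} and combine them so that the parameter restrictions in \eqref{nex1}--\eqref{nex2} force a sum of sign-definite quantities to vanish.

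\emph{Step 1: Nehari-type identities.} Testing the first equation of \eqref{sp} with $u$ and the second with $\phi$, and integrating over $\R^N$, gives
$$\intr\big(|\nabla u|^2+u^2\big)+\intr K\phi|u|^q=\intr(\ia*|u|^p)|u|^p,\qquad \intr|\nabla\phi|^2=\intr K\phi|u|^q.$$
The hypotheses \eqref{reg1}--\eqref{reg2}, together with the Hardy--Littlewood--Sobolev inequality, make all these integrals finite; in particular $u\in L^{2Np/(N+\alpha)}(\R^N)$ forces $(\ia*|u|^p)|u|^p\in L^1(\R^N)$. Moreover, since $-\Delta\phi=K|u|^q\ge0$ and $\phi\in\mathcal D^{1,2}(\R^N)$, one has $\phi=I_2*(K|u|^q)\ge0$ in $\R^N$, which will be used below.

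\emph{Step 2: Pohozaev identity.} I would multiply the first equation by $x\cdot\nabla u$ and integrate; rigorously this is done against a cutoff $\zeta(\cdot/R)$ and letting $R\to\infty$, which is exactly where the local integrability $|\nabla u|\in H^1_{loc}\cap L^{2Np/(N+\alpha)}_{loc}$ is needed to kill the remainder terms. Using the scaling identity $\intr(\ia*|u|^p)\,(x\cdot\nabla|u|^p)=-\tfrac{N+\alpha}{2}\intr(\ia*|u|^p)|u|^p$ for the Riesz-potential term and an integration by parts in the coupling term, this yields
$$-\frac{N-2}{2}\intr|\nabla u|^2-\frac N2\intr u^2-\frac1q\intr\big[(x\cdot\nabla K)\phi+K(x\cdot\nabla\phi)+NK\phi\big]|u|^q=-\frac{N+\alpha}{2p}\intr(\ia*|u|^p)|u|^p.$$
Applying the same computation to the second equation (multiply by $x\cdot\nabla\phi$) gives $\intr K(x\cdot\nabla\phi)|u|^q=-\tfrac{N-2}{2}\intr|\nabla\phi|^2=-\tfrac{N-2}{2}\intr K\phi|u|^q$, so that the bracket above collapses to $\tfrac{N+2}{2}\intr K\phi|u|^q+\intr(x\cdot\nabla K)\phi|u|^q$.

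\emph{Step 3: elimination.} Put $A=\intr|\nabla u|^2\ge0$, $B=\intr u^2\ge0$, $E=\intr K\phi|u|^q\ge0$, $F=\intr(x\cdot\nabla K)\phi|u|^q$. By Step 1, $\intr(\ia*|u|^p)|u|^p=A+B+E$; substituting into the Pohozaev identity of Step 2 and rearranging gives
$$\Big(\tfrac{N+\alpha}{2p}-\tfrac{N-2}{2}\Big)A+\Big(\tfrac{N+\alpha}{2p}-\tfrac N2\Big)B+\Big(\tfrac{N+\alpha}{2p}-\tfrac{N+2}{2q}\Big)E=\tfrac1q F.$$
Under (i), $x\cdot\nabla K\ge-\gamma K$ and $\phi\ge0$ give $F\ge-\gamma E$, so the right side is $\ge-\tfrac\gamma q E$; and \eqref{nex1} says precisely that $\tfrac{N+\alpha}{2p}\le\tfrac{N-2}{2}$ (so the $A$-coefficient is $\le0$, and then the $B$-coefficient is $<0$ since $\tfrac{N-2}{2}<\tfrac N2$) and $\tfrac{N+\alpha}{2p}\le\tfrac{N+2-2\gamma}{2q}$ (so the $E$-coefficient plus $\tfrac\gamma q$ is $\le0$). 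Hence, after moving $-\tfrac\gamma q E$ to the left, the left side is a sum of non-positive terms that is $\ge0$, so each vanishes; as the $B$-coefficient is strictly negative, $B=0$, i.e. $u\equiv0$, whence $E=0$, $\nabla\phi\equiv0$, $\phi\equiv0$. Under (ii) all inequalities reverse: $F\le-\gamma E$, the $B$-coefficient is $\ge0$, the $A$-coefficient is then $\ge1>0$, and \eqref{nex2} makes the $E$-coefficient plus $\tfrac\gamma q$ nonnegative; the same bookkeeping forces $A=0$, so $u$ is a constant in $L^2(\R^N)$, hence $u\equiv0$, and again $\phi\equiv0$.

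\emph{Main difficulty.} Steps 1 and 3 are elementary once the identities are in hand; the real work is the rigorous derivation of the Pohozaev identity in Step 2, where the multiplier $x\cdot\nabla u$ is unbounded and the nonlocality of $\ia*|u|^p$ must be handled. One has to argue on balls with cutoff functions, use elliptic regularity together with \eqref{reg1}--\eqref{reg2} to control $\nabla u$ and $\nabla\phi$, and verify that every cutoff remainder tends to $0$ as $R\to\infty$.
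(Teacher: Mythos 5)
Your proposal is correct and follows essentially the same route as the paper: the identity you state in Step 2 is exactly the paper's Pohozaev identity (Proposition \ref{poh}), and your Step 3 bookkeeping combined with the tested (Nehari-type) identity and $\phi=I_2*(K|u|^q)\geq 0$ reproduces the paper's case analysis for (i) and (ii). The only minor difference is how the coupling term is handled inside the Pohozaev identity: the paper symmetrizes the double integral arising from the representation $\phi=I_2*(K|u|^q)$, whereas you invoke a second Pohozaev identity for the Poisson equation together with $\intr|\nabla\phi|^2=\intr K(x)\phi|u|^q$; both collapse the bracket to $\frac{N+2}{2q}\intr K(x)\phi|u|^q+\frac{1}{q}\intr (x\cdot\nabla K(x))\,\phi|u|^q$, so the two derivations coincide in substance.
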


By taking $K\equiv 0$, for suitable choice of $\gamma$ in \eqref{nex1} and \eqref{nex2} we obtain that if $p\geq\frac{N+\alpha}{N-2}$ or $p\leq\frac{N+\alpha}{N}$ then the only solution of  \eqref{gstate} which satisfies $u\in H^1(\R^N)\cap L^{\frac{2Np}{N+\alpha}}(\R^N)$  and $|\nabla u|\in H^{1}_{loc}(\R^N)\cap L^{\frac{2Np}{N+\alpha}}_{loc}(\R^N)$ is the trivial one. We thus recover the result in \cite[Theorem 2]{MV2013}.

By taking $\gamma=0$ in Theorem \ref{nonex} we obtain:
\begin{cor}\label{cor1}
Let $K\equiv const>0$. If one of the following conditions hold
$$
p\geq\frac{N+\alpha}{N-2}\quad\mbox{ and }\quad \frac{N+\alpha}{p}\leq \frac{N+2}{q};
$$
or
$$
p\leq\frac{N+\alpha}{N}\quad\mbox{ and }\quad \frac{N+\alpha}{p}>\frac{N+2}{q},
$$
then the only solution $(u,\phi)$  of \eqref{sp} satisfying \eqref{reg1}-\eqref{reg2} is the trivial one. 
\end{cor}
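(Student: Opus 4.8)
The plan is to deduce this statement directly from Theorem \ref{nonex} by specializing the parameter $\gamma$ to $0$. First I would observe that when $K$ is a positive constant we have $\nabla K\equiv 0$ on $\R^N$, so that $x\cdot\nabla K(x)+\gamma K(x)=\gamma K(x)$ for every $\gamma\in\R$; choosing $\gamma=0$ gives $x\cdot\nabla K(x)+0\cdot K(x)\equiv 0$ on $\R^N$, which is simultaneously $\geq 0$ and $\leq 0$ everywhere. Hence the sign hypotheses in both parts (i) and (ii) of Theorem \ref{nonex} are automatically met with this choice of $\gamma$.

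For the first set of conditions, note that $N\geq 3$ forces $\frac{N+2}{2}>0$, so $\gamma=0\in\big(-\infty,\frac{N+2}{2}\big)$ and Theorem \ref{nonex}(i) applies; with $\gamma=0$ the inequalities in \eqref{nex1} become exactly $p\geq\frac{N+\alpha}{N-2}$ and $\frac{N+\alpha}{p}\leq\frac{N+2}{q}$, which are precisely the assumed hypotheses, so any $(u,\phi)$ satisfying \eqref{reg1}--\eqref{reg2} is trivial. For the second set of conditions, $\gamma=0\in\R$ and, as noted above, the sign condition required by Theorem \ref{nonex}(ii) holds; with $\gamma=0$ the inequalities in \eqref{nex2} read $p\leq\frac{N+\alpha}{N}$ and $\frac{N+\alpha}{p}\geq\frac{N+2}{q}$, and since our assumption $\frac{N+\alpha}{p}>\frac{N+2}{q}$ is stronger than the second of these, Theorem \ref{nonex}(ii) again forces $u\equiv\phi\equiv 0$. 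In both cases the conclusion $u\equiv\phi\equiv 0$ is exactly the claim.

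Since the corollary is a verbatim specialization of the preceding theorem, I do not anticipate any genuine obstacle: the only point needing (trivial) verification is that $\gamma=0$ lies in the admissible range $\big(-\infty,\frac{N+2}{2}\big)$ demanded by part (i), which is immediate from $N\geq 3$. One may also remark that the slight strengthening to a strict inequality in the second case is harmless, the non-strict version delivered by Theorem \ref{nonex}(ii) already covering it, so the statement is in fact a marginally weaker restatement of the $\gamma=0$ instance of Theorem \ref{nonex}.
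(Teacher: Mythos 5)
Your argument is correct and is exactly the paper's route: the corollary is obtained by taking $\gamma=0$ in Theorem \ref{nonex}, noting that for constant $K>0$ one has $x\cdot\nabla K(x)+0\cdot K(x)\equiv 0$, so both sign conditions hold, and that $0\in\big(-\infty,\tfrac{N+2}{2}\big)$. Your observation that the strict inequality $\frac{N+\alpha}{p}>\frac{N+2}{q}$ in the second case is simply a stronger hypothesis than the non-strict one in \eqref{nex2}, and hence harmless, is also accurate.
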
 

\begin{cor}\label{cor2}
Let $K(x)=(1+|x|^2)^{-\gamma/2}$.
If $\gamma\in [0,\frac{N+2}{2})$ and \eqref{nex1} holds or $\gamma\leq 0$ and \eqref{nex2} holds then 
the only solution $(u,\phi)$  of \eqref{sp} satisfying \eqref{reg1}-\eqref{reg2} is the trivial one. 
\end{cor}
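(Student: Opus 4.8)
The plan is to deduce Corollary~\ref{cor2} directly from Theorem~\ref{nonex} by computing the quantity $x\cdot\nabla K(x)+\gamma K(x)$ for the explicit weight $K(x)=(1+|x|^2)^{-\gamma/2}$ and checking that its sign is governed by the sign of $\gamma$. First I would note that, since $1+|x|^2>0$ on all of $\R^N$, the function $K$ is of class $C^\infty$ and strictly positive for every $\gamma\in\R$, so the standing hypotheses $K\in C^1(\R^N)$, $K\geq 0$ of Theorem~\ref{nonex} hold automatically; the regularity requirements \eqref{reg1}--\eqref{reg2} are imposed on the pair $(u,\phi)$ and so carry over unchanged.

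Next I would differentiate, obtaining $\nabla K(x)=-\gamma\,x\,(1+|x|^2)^{-\gamma/2-1}$, and hence
\[
x\cdot\nabla K(x)+\gamma K(x)=-\gamma|x|^2(1+|x|^2)^{-\gamma/2-1}+\gamma(1+|x|^2)^{-\gamma/2}=\gamma\,(1+|x|^2)^{-\gamma/2-1}.
\]
The crucial algebraic point is the cancellation $-|x|^2+(1+|x|^2)=1$, which shows that $x\cdot\nabla K(x)+\gamma K(x)$ keeps, throughout $\R^N$, the same sign as $\gamma$ rather than changing sign, because $(1+|x|^2)^{-\gamma/2-1}>0$ everywhere.

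It then only remains to match this with the two alternatives of Theorem~\ref{nonex}. If $\gamma\in[0,\frac{N+2}{2})$, the identity above gives $x\cdot\nabla K(x)+\gamma K(x)\geq 0$ with $\gamma<\frac{N+2}{2}$, so assuming \eqref{nex1} places us in case (i) of Theorem~\ref{nonex}, whence $u\equiv\phi\equiv 0$. If instead $\gamma\leq 0$, the identity gives $x\cdot\nabla K(x)+\gamma K(x)\leq 0$ for this $\gamma\in\R$, so assuming \eqref{nex2} places us in case (ii) of Theorem~\ref{nonex}, again forcing the trivial solution. Since the whole argument reduces to a one-line computation, there is no real obstacle; the only thing demanding a little care is the sign bookkeeping in the displayed identity, together with the fact that $\gamma$ is unrestricted from below in case (ii) but must remain below $\frac{N+2}{2}$ in case (i).
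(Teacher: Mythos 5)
Your computation is correct and is exactly the argument the paper intends (the corollary is stated as an immediate consequence of Theorem \ref{nonex}, with the verification left implicit): for $K(x)=(1+|x|^2)^{-\gamma/2}$ one has $x\cdot\nabla K(x)+\gamma K(x)=\gamma(1+|x|^2)^{-\gamma/2-1}$, whose sign equals that of $\gamma$, so $\gamma\in[0,\tfrac{N+2}{2})$ with \eqref{nex1} falls under case (i) and $\gamma\leq 0$ with \eqref{nex2} under case (ii). Nothing is missing; the sign bookkeeping and the hypothesis checks ($K\in C^1$, $K\geq 0$, $\gamma<\tfrac{N+2}{2}$ in case (i)) are all handled correctly.
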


Let us now discuss the existence of a solution to \eqref{sp}. Crucial to our approach will be the Hardy-Littlewood-Sobolev inequality
\begin{equation}\label{hls1}
\intr |\ia*u|^{\frac{Ns}{N-\alpha s}}\leq C\Big(\intr |u|^s\Big)^{\frac{N}{N-\alpha s}}\quad \mbox{for any }u\in L^s(\R^N), s\in \big(1,\frac{N}{s}\big)
\end{equation}
which also implies
\begin{equation}\label{hls2}
\Big | \intr (\ia*u)v \Big|\leq C\|u\|_s\|v\|_t\quad \mbox{for any }u\in L^s(\R^N), v\in L^t(\R^N), \frac{1}{s}+\frac{1}{t}=1+\frac{\alpha}{N}.
\end{equation}

It is more convenient to reduce our system \eqref{sp} to a single equation. More exactly, for any $u\in H^1(\R^N)$ define
$$
T_u:{\mathcal D}^{1,2}(\R^N)\to \R\,,
\quad
T_u(v)=\intr K(x)|u|^q v dx.
$$
If $K\in L^r(\R^N)$, with
\begin{equation}\label{k}
\frac{1}{r}+\frac{q+1}{2^*}=1 \quad \mbox{ and }\quad 1<q<\frac{N+2}{N-2},
\end{equation}
then, by H\"older and Sobolev inequality one gets that  $T_u$ is linear and continuous. By Lax-Milgram theorem, there exists a unique $\phi_u\in {\mathcal D}^{1,2}(\R^N)$ such that
\begin{equation}\label{deft}
T_u(v)=\big( \phi_u,v\big)_{{\mathcal D}^{1,2}}\quad\mbox{ for all }v\in {\mathcal D}^{1,2}(\R^N).
\end{equation}
As a result, $\phi_u$ solves
$$
-\Delta \phi_u=K(x)|u|^q\quad \mbox{ in }\R^N,
$$
and
$$
\phi_u(x)=A_2 \intr \frac{K(y)|u|^p(y)}{|x-y|^{N-2}}dy \quad\mbox{  where $A_2$ corresponds to \eqref{rieszdef}.}
$$
Hence
\begin{equation}\label{pot}
\phi_u=I_2*(K|u|^q).
\end{equation}
More properties of $\phi_u$ are given in Lemma \ref{phiu} below. We should finally note that with $\phi_u$ given by \eqref{pot}, system \eqref{sp} reduces implicitly to the single equation
\begin{equation}\label{ssp}
-\Delta u+u+K(x) \phi_u |u|^{q-2}u=(I_\alpha*|u|^p)|u|^{p-2}u \quad\mbox{ in }\R^N.
\end{equation}

Let us remark that \eqref{ssp} has a variational structure. If $\frac{N+\alpha}{N}<p<\frac{N+\alpha}{N-2}$ and $q,r$ satisfy \eqref{k} then functional
$$
\mj(u)=\frac{1}{2}\intr(|\nabla u|^2+|u|^2)+\frac{1}{2q}\intr K(x)\phi_u |u|^q-\frac{1}{2p}\intr (\ia*|u|^p)|u|^p
$$
is well defined for all $u\in H^1(\R^N)$ and any critical point $u$ of $\mj$ is a weak solution to \eqref{ssp}.

Our existence result is the following.

\begin{theorem}\label{existence}
Assume $1<q<\frac{N+2}{N-2}$, $\frac{N+\alpha}{N}<p<\frac{N+\alpha}{N-2}$, $q<p$ and $K\in L^r(\R^N)$, with $r$ given by \eqref{k}.
Then, there exists $M>0$ such that for any $\|K\|_r<M$ problem \eqref{sp} has a solution $(u,\phi)\in H^1(\R^N)\times {\mathcal D}^{1,2}(\R^N)$. Moreover, $u$  is a ground state of \eqref{ssp}.
\end{theorem}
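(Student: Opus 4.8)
The plan is to obtain the ground state as a minimizer of the functional $\mj$ over the associated Nehari manifold $\me = \{u\in H^1(\R^N)\setminus\{0\} : \langle \mj'(u),u\rangle = 0\}$. First I would establish the basic properties of $\phi_u$ in Lemma \ref{phiu}: using \eqref{pot}, the Hardy-Littlewood-Sobolev inequality \eqref{hls2}, and the Hölder/Sobolev estimates under hypothesis \eqref{k}, one gets $\|\phi_u\|_{{\mathcal D}^{1,2}}\le C\|K\|_r\|u\|^q$, the scaling identity $\phi_{tu}=t^q\phi_u$, and (via \eqref{deft}) the key formula $\intr K(x)\phi_u|u|^q = \|\phi_u\|_{{\mathcal D}^{1,2}}^2 \ge 0$. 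Then I would check that $\mj\in C^1(H^1(\R^N))$ with
$$
\langle\mj'(u),v\rangle = \intr(\nabla u\cdot\nabla v + uv) + \intr K(x)\phi_u|u|^{q-2}uv - \intr(\ia*|u|^p)|u|^{p-2}uv,
$$
so that on $\me$ one has $\|u\|^2 + \intr K\phi_u|u|^q = \intr(\ia*|u|^p)|u|^p$, and consequently
$$
\mj(u) = \Big(\frac12-\frac1{2p}\Big)\|u\|^2 + \Big(\frac1{2q}-\frac1{2p}\Big)\intr K\phi_u|u|^q \ge \Big(\frac12-\frac1{2p}\Big)\|u\|^2,
$$
where the last inequality uses $q<p$ and $\intr K\phi_u|u|^q\ge 0$; in particular $\mj$ is bounded below on $\me$, and we set $c=\inf_\me \mj$.

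Next I would verify $\me$ is nonempty and, for each $u\ne 0$, that there is a unique $t(u)>0$ with $t(u)u\in\me$: writing $g(t)=\langle\mj'(tu),tu\rangle = t^2\|u\|^2 + t^{2q}\intr K\phi_u|u|^q - t^{2p}\intr(\ia*|u|^p)|u|^p$, the function $t\mapsto g(t)/t^2$ is strictly decreasing from $+\infty$ (using $1<q<p$) and eventually negative since the convolution term dominates for large $t$; this also gives the characterization $c=\inf_{u\ne 0}\max_{t>0}\mj(tu)$, which links $c$ to the mountain-pass level. From the displayed lower bound any minimizing sequence $(u_n)\subset\me$ is bounded in $H^1(\R^N)$, so up to a subsequence $u_n\rightharpoonup u$ weakly. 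The smallness condition $\|K\|_r<M$ enters here and in the next step: it rules out that the ``Poisson'' term forces $c\ge c_\infty$, the ground state level of the limiting Choquard equation \eqref{gstate}, which would make concentration-compactness fail; quantitatively, on $\me$ one gets $c\le \max_t\mj(tw)$ for a fixed test function $w$, and since $\intr K\phi_w|w|^q\le C\|K\|_r^2\|w\|^{2q}$ is small, $c$ stays strictly below $c_\infty$.

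The heart of the argument is recovering compactness of the minimizing sequence. Here I would invoke the Lions concentration-compactness principle (or the Brezis-Lieb lemma together with the nonlocal Brezis-Lieb lemma for the Choquard term, as in \cite{MV2013}) applied to $\rho_n=|u_n|^2$: vanishing is excluded because then $\intr(\ia*|u_n|^p)|u_n|^p\to0$ forcing $\|u_n\|\to0$, contradicting that elements of $\me$ are bounded away from $0$ (again from $g(t)/t^2$ strictly decreasing, $\|u\|\ge\delta>0$ on $\me$); dichotomy is excluded by the strict subadditivity inequality $c< c + c_\infty$ — more precisely, by showing that splitting mass strictly raises the energy, which is where the strict inequality $c<c_\infty$ and the term $q<p$ are used. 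Compactness then gives $u_n\to u$ strongly (using Brezis-Lieb to upgrade weak to strong convergence once mass is not lost), whence $u\in\me$, $\mj(u)=c$, and $u$ is a critical point of $\mj$ by the standard Lagrange-multiplier argument on the Nehari manifold (the multiplier vanishes because $\langle(\me')(u),u\rangle\ne0$, again thanks to $q<p$). Finally $\phi:=\phi_u\in{\mathcal D}^{1,2}(\R^N)$ solves the second equation by construction via \eqref{deft}, so $(u,\phi)$ solves \eqref{sp}, and minimality over $\me$ makes $u$ a ground state of \eqref{ssp}. The main obstacle I anticipate is precisely the compactness step: controlling the nonlocal Choquard nonlinearity under weak convergence and proving the strict subadditivity $c<c_\infty$, for which the smallness of $\|K\|_r$ is indispensable.
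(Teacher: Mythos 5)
Your overall strategy coincides with the paper's: reduce to the single equation \eqref{ssp}, minimize $\mj$ on its Nehari manifold ${\mathcal N}$, show that the minimization level lies strictly below the ground-state level $m_\me$ of the limiting Choquard equation \eqref{gstate}, recover compactness at levels below $m_\me$, and use the Lagrange multiplier argument (Proposition \ref{nehari}) to conclude that the constrained minimizer is a free critical point, hence a ground state, with $\phi=\phi_u$ solving the second equation by construction. The only structural difference is that you run Lions' concentration--compactness trichotomy directly on a minimizing sequence, whereas the paper first applies Ekeland's principle to produce a $(PS)_{m_\mj}$ sequence and then invokes the splitting result (Proposition \ref{compact} together with Corollary \ref{corr1}, built on the nonlocal Brezis--Lieb lemmas); the paper's route sidesteps the somewhat delicate exclusion of dichotomy for a constrained minimizing sequence, but the two implementations are interchangeable here.

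There is, however, a genuine gap at the decisive step, the strict level inequality $m_\mj<m_\me$ (your $c<c_\infty$). Your justification --- ``$c\le\max_{t>0}\mj(tw)$ for a fixed test function $w$, and since $\intr K\phi_w|w|^q\le C\|K\|_r^2\|w\|^{2q}$ is small, $c$ stays strictly below $c_\infty$'' --- cannot work as written: since $K\ge 0$ and $\phi_w\ge 0$, one has $\mj(tw)=\me(tw)+\frac{t^{2q}}{2q}\intr K\phi_w|w|^q\ge \me(tw)$ for every $t>0$, hence, taking $w$ the ground state of \eqref{gstate},
$$
\max_{t>0}\mj(tw)\ \ge\ \max_{t>0}\me(tw)\ =\ \me(w)\ =\ m_\me,
$$
so smallness of a nonnegative additive perturbation alone can never push this maximum strictly below $m_\me$. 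The strictness must come from a finer mechanism, and this is precisely what the paper's Lemma \ref{rest} supplies: writing $A(w)=\intr K\phi_w|w|^q$ and $B(w)=\intr(\ia*|w|^p)|w|^p$, one evaluates $\mj$ at the ${\mathcal N}$-projection $t(w)w$, uses $B(w)=\|w\|^2$ to get $\mj(tw)=\bigl(\frac{t^{2}}{2}-\frac{t^{2p}}{2p}\bigr)\|w\|^{2}+\frac{t^{2q}}{2q}A(w)$ with $t=t(w)>1$, and exploits that the bracket $\frac{t^{2}}{2}-\frac{t^{2p}}{2p}$ falls strictly below $\frac12-\frac1{2p}$ once $t>1$, the smallness of $\|K\|_r$ being used to control the remaining term; note also that $t(w)$ itself depends on $\|K\|_r$, so this dependence has to be tracked when quantifying $M$. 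Without an argument of this type your level estimate, and with it the exclusion of dichotomy and the whole compactness step, is unsupported. A minor additional slip: with the $K$-term present, $t\mapsto\langle\mj'(tu),tu\rangle/t^{2}$ is not decreasing (it increases near $t=0$ when $A(u)>0$); divide by $t^{2q}$ instead, or argue via the single sign change, to obtain the unique projection onto ${\mathcal N}$.
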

In order to deal with the lack of compactness of $H^1(\R^N)$ into the Lebesgue spaces $L^s(\R^N)$, $2\leq s\leq 2^*$, we rely on a careful analysis of the Palais-Smale (in short $(PS)$) sequences for $\mj$ restricted to its Nehari manifold ${\mathcal N}$. Roughly speaking, we have that any $(PS)$ sequence of $\mj\!\mid_{\mathcal N}$ either converges strongly to its weak limit or differs from it by a finite number of sequences, which are nothing but translated solutions of \eqref{gstate}, centered at points whose distances from the origin and whose interdistances go to infinity (see Proposition \ref{compact}). Then, a further evaluation of the energy levels of $\mj$ allows us to locate some ranges for which the compactness is still preserved. Such an approach was successfully applied for the Schr\"odinger-Poisson system \eqref{sp0} in \cite{CM2016,CV2010} and recently adapted to the study of the non-autonomous fractional Choquard equation in \cite{CL2016}. 
Unlike the approach in \cite{CL2016} where a direct energy estimation is possible due to the presence of suitable non-autonomous terms, we shall rely essentially on several nonlocal Brezis-Lieb type results as we describe in Section \ref{blieb}.

The remaining part of the paper is organised as follows. Section 3 contains some preliminary results which we will use in the study of the existence of a ground state to \eqref{sp}. Sections 4 and 5 contain the proofs of our main results.

\section{Preliminary results}

\subsection{Some properties of $\phi_u$}

\begin{lemma}\label{phiu} We have
\begin{enumerate}
\item[(i)] $\phi_u\geq 0$ for any $u\in H^1(\R^N)$;
\item[(ii)] $\phi_{tu}=t^q\phi_u$ for any $t>0$;
\item[(iii)] if $u_n\rightharpoonup u$ weakly in $H^1(\R^N)$, then $\phi_{u_n}\to \phi_u$ strongly in ${\mathcal D}^{1,2}(\R^N)$.
\end{enumerate}
\end{lemma}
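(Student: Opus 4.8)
\medskip

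\noindent\textbf{Proof strategy.} Parts (i) and (ii) need almost nothing. For (ii), since $|tu|^q=t^q|u|^q$ for $t>0$, the function $t^q\phi_u$ satisfies the defining relation \eqref{deft} with $u$ replaced by $tu$, so uniqueness in the Lax--Milgram theorem forces $\phi_{tu}=t^q\phi_u$; one may also simply read this off \eqref{pot}. For (i), the Riesz kernel in \eqref{rieszdef} is pointwise positive and $K|u|^q\ge 0$, hence $\phi_u=I_2*(K|u|^q)\ge 0$ by \eqref{pot}; alternatively, inserting $v=-\phi_u^-\in{\mathcal D}^{1,2}(\R^N)$ in \eqref{deft} gives $\|\phi_u^-\|_{{\mathcal D}^{1,2}}^2=-\intr K(x)|u|^q\phi_u^-\le 0$, so $\phi_u^-\equiv 0$.

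For (iii), the idea is to first reduce the claimed ${\mathcal D}^{1,2}$-convergence to an $L^{2N/(N+2)}$-convergence of the source terms $K|u_n|^q$, and then to recover the missing compactness solely from the integrability of $K$. Subtracting \eqref{deft} for $u_n$ and for $u$, for every $v\in{\mathcal D}^{1,2}(\R^N)$ we have
$$
\big(\phi_{u_n}-\phi_u,v\big)_{{\mathcal D}^{1,2}}=\intr K(x)\big(|u_n|^q-|u|^q\big)v.
$$
Choosing $v=\phi_{u_n}-\phi_u$, applying H\"older with the conjugate exponents $\frac{2N}{N+2}$ and $2^*$ and then the Sobolev inequality $\|v\|_{2^*}\le C\|v\|_{{\mathcal D}^{1,2}}$, we obtain
$$
\|\phi_{u_n}-\phi_u\|_{{\mathcal D}^{1,2}}\le C\,\big\|K\big(|u_n|^q-|u|^q\big)\big\|_{\frac{2N}{N+2}},
$$
the right-hand side being finite since, by H\"older with exponents $r$ and $2^*/q$, $\|K|w|^q\|_{2N/(N+2)}\le\|K\|_r\|w\|_{2^*}^q$ for $w\in H^1(\R^N)$ --- this is exactly where \eqref{k} enters, the relation $\frac1r+\frac{q+1}{2^*}=1$ making all three exponents admissible. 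It therefore remains to show $\|K(|u_n|^q-|u|^q)\|_{2N/(N+2)}\to 0$.

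This last step is the real point, and the main obstacle: the embedding $H^1(\R^N)\hookrightarrow L^{2^*}(\R^N)$ is not compact and $2^*/q$ is precisely the critical exponent for the bounded sequence $(|u_n|^q)$, so the convergence cannot come from Rellich-type embeddings and must be squeezed out of $K\in L^r(\R^N)$. The plan is: $(u_n)$ bounded in $H^1(\R^N)$ implies $(|u_n|^q-|u|^q)$ bounded in $L^{2^*/q}(\R^N)$, whence, by H\"older (using again $\frac1r+\frac{q+1}{2^*}=1$), for every measurable $E\subseteq\R^N$
$$
\int_E\big|K(|u_n|^q-|u|^q)\big|^{\frac{2N}{N+2}}\,dx\le C\Big(\int_E|K|^r\,dx\Big)^{\frac{2N}{r(N+2)}}
$$
with $C$ independent of $n$. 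Passing to a subsequence along which $u_n\to u$ a.e.\ (Rellich), so that $K(|u_n|^q-|u|^q)\to 0$ a.e., and invoking the displayed bound together with the absolute continuity and decay at infinity of $\int|K|^r$ to supply uniform integrability and tightness, the Vitali convergence theorem gives $\|K(|u_n|^q-|u|^q)\|_{2N/(N+2)}\to 0$ along that subsequence; since the limit does not depend on the subsequence, the full sequence converges, and with the estimate above this yields $\phi_{u_n}\to\phi_u$ in ${\mathcal D}^{1,2}(\R^N)$. Everything outside this borderline compactness argument is routine.
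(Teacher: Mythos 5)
Your proof is correct, and for parts (i) and (ii) it does exactly what the paper does (the paper simply says they follow from the definition of $\phi_u$; your Lax--Milgram uniqueness argument and the sign argument via $v=-\phi_u^-$, or directly \eqref{pot}, are fine). For (iii) your reduction step coincides with the paper's: testing \eqref{deft} with $v=\phi_{u_n}-\phi_u$ and using H\"older plus Sobolev is the same as the paper's estimate of the operator norm $\|T_{u_n}-T_u\|_{{\mathcal L}({\mathcal D}^{1,2})}$, and both reduce the claim to $\intr K^{\frac{2N}{N+2}}\big||u_n|^q-|u|^q\big|^{\frac{2N}{N+2}}\to 0$. Where you diverge is the mechanism for this last limit: the paper observes that $\big||u_n|^q-|u|^q\big|^{\frac{2N}{N+2}}$ is bounded in $L^{\frac{N+2}{q(N-2)}}(\R^N)$ and converges a.e.\ to $0$, hence (Lemma \ref{bogachev}) converges weakly to $0$ in that space, and then pairs it with the fixed dual element $K^{\frac{2N}{N+2}}\in L^{\frac{r(N+2)}{2N}}(\R^N)$; you instead get the same conclusion from Vitali's theorem, with uniform integrability and tightness of $|K(|u_n|^q-|u|^q)|^{\frac{2N}{N+2}}$ extracted by H\"older over arbitrary measurable sets from $|K|^r\in L^1(\R^N)$. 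The two devices exploit exactly the same structural facts ($K$ fixed in $L^r$, a.e.\ convergence up to a subsequence, boundedness from the $H^1\hookrightarrow L^{2^*}$ embedding), so neither is stronger; the paper's duality pairing is shorter once Lemma \ref{bogachev} is available, while your Vitali argument is more self-contained and has the merit of making explicit the subsequence extraction for a.e.\ convergence and the subsequent subsequence principle, which the paper leaves implicit.
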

\begin{proof} (i) and (ii) follow from the definition of $\phi_u$. 

(iii) For a proof of this part in dimension $N=3$ the reader may consult \cite[Proposition 2.2(a)]{CM2016}. Here we provide a different argument.

Let us note first that from the definition of $\phi_u$ in \eqref{deft} we deduce
$$
\|\phi_u\|_{{\mathcal D}^{1,2}}=\|T_u\|_{{\mathcal L}({\mathcal D}^{1,2})}.
$$
For any $v\in {\mathcal D}^{1,2}(\R^N)$ we have
$$
\begin{aligned}
|T_{u_n}(v)-T_u(v)|&\leq \intr K(x)\big||u_n|^q-|u|^q\big||v|\\
&\leq \|v\|_{{\mathcal D}^{1,2}}\Big(\intr K(x)^{\frac{2N}{N+2}}\big||u_n|^q-|u|^q\big|^{\frac{2N}{N+2}}  \Big)^{\frac{N+2}{2N}}.
\end{aligned}
$$
Using the continuous embedding of $H^1(\R^N)$ into $L^s(\R^N)$, $2\leq s\leq 2^*$ and Lemma \ref{bogachev} below, it follows that
$$
\big||u_n|^q-|u|^q\big|^{\frac{2N}{N+2}}\rightharpoonup 0\quad\mbox{ weakly in }L^{\frac{N+2}{q(N-2)}}(\R^N).
$$
Thus, since $K^{\frac{2N}{N+2}}\in L^{\frac{r(N+2)}{2N}}(\R^N)$ we deduce
$$
\begin{aligned}
\|\phi_{u_n}-\phi_u\|_{{\mathcal D}^{1,2}}&=\|T_{u_n}-T_u\|_{{\mathcal L}({\mathcal D}^{1,2})}\\
&\leq \Big(\intr K(x)^{\frac{2N}{N+2}}\big||u_n|^q-|u|^q\big|^{\frac{2N}{N+2}}  \Big)^{\frac{N+2}{2N}}\to 0.
\end{aligned}
$$
\end{proof}

\subsection{Some nonlocal versions of Brezis-Lieb lemma} \label{blieb}

In this part we collect some useful results in dealing with the existence of a ground state solution to \eqref{gstate}.

We first recall the concentration-compactness lemma of P.-L. Lions formulated in an inequality setting.
\begin{lemma}\label{cc}{\sf (\cite[Lemma I.1]{Lions1984},  \cite[Lemma 2.3]{MV2013})}

Let $s\in [2,2^*]$. There exists a constant $C>0$ such that for any $u\in H^1(\R^N)$ we have
$$
\int_{\R^N}|u|^s \leq C\|u\|\Big(\sup_{y\in \R^N} \int_{B_1(y)}|u|^s \Big)^{1-\frac{2}{s}}.
$$
\end{lemma}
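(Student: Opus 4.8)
The plan is to reduce the global integral $\int_{\R^N}|u|^s$ to a sum of local contributions over a fixed tiling of $\R^N$ by small cubes, to bound each local contribution by the local $H^1$ norm via the (sub)critical Sobolev embedding $H^1\hookrightarrow L^s$, and to factor out a suitable power of the localized supremum before summing the local estimates. No compactness enters: the only ingredients are the continuous embedding $H^1(Q)\hookrightarrow L^s(Q)$ on a fixed bounded cube $Q$ (available because $2\le s\le 2^*$) together with the translation invariance of the corresponding constant.

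Concretely, I would proceed as follows. First fix $\rho=\rho(N)\in(0,1)$ small enough that every cube of side length $\rho$ is contained in the ball of radius $1$ about its centre, and tile $\R^N$ by cubes $\{Q_j\}_{j\ge1}$ of side $\rho$ with pairwise disjoint interiors, so that $\sum_j\int_{Q_j}f=\int_{\R^N}f$ for every measurable $f\ge0$. Set $\mu:=\sup_{y\in\R^N}\int_{B_1(y)}|u|^s$; since each $Q_j$ lies inside some ball $B_1(y_j)$ we have $\int_{Q_j}|u|^s\le\mu$ for all $j$. Using $s\ge2$ and the monotonicity of $t\mapsto t^{1-2/s}$, for each $j$
$$\int_{Q_j}|u|^s=\Big(\int_{Q_j}|u|^s\Big)^{1-\frac2s}\Big(\int_{Q_j}|u|^s\Big)^{\frac2s}\le\mu^{\,1-\frac2s}\,\|u\|_{L^s(Q_j)}^2,$$
while the Sobolev embedding on $Q_j$, whose constant $C=C(N,s,\rho)$ does not depend on $j$ by translation invariance, gives $\|u\|_{L^s(Q_j)}^2\le C\|u\|_{H^1(Q_j)}^2=C\int_{Q_j}(|\nabla u|^2+|u|^2)$. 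Summing over $j$ and using that the $Q_j$ tile $\R^N$ with disjoint interiors,
$$\int_{\R^N}|u|^s=\sum_j\int_{Q_j}|u|^s\le C\,\mu^{\,1-\frac2s}\sum_j\int_{Q_j}\big(|\nabla u|^2+|u|^2\big)=C\,\mu^{\,1-\frac2s}\,\|u\|^2,$$
which is the desired inequality.

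The one point calling for a word of care is that the Sobolev constant in $\|u\|_{L^s(Q_j)}\le C\|u\|_{H^1(Q_j)}$ be independent of $j$; this follows at once from the Sobolev inequality on a single fixed cube together with the translation invariance of the $L^s$ and $H^1$ norms. Alternatively one may replace the disjoint tiling by a covering of $\R^N$ by unit balls of bounded overlap (paying only a dimensional overlap constant when summing), or invoke the Gagliardo--Nirenberg inequality on $\R^N$ after a Sobolev extension from $Q_j$. In either case the argument is entirely elementary, and I foresee no genuine obstacle.
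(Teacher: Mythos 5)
This lemma is quoted in the paper without proof (it is cited from Lions and from Moroz--Van Schaftingen), and your argument is essentially the standard proof given in those references: localize on a cover of $\R^N$ by congruent sets, factor out $\mu^{1-2/s}$ where $\mu=\sup_{y}\int_{B_1(y)}|u|^s$, apply the Sobolev embedding on each piece with a translation-invariant constant, and sum; all steps are correct, including the disjoint-tiling summation $\sum_j\int_{Q_j}(|\nabla u|^2+|u|^2)=\|u\|^2$. One point to flag: what you actually prove is
$$
\int_{\R^N}|u|^s\leq C\,\|u\|^2\Big(\sup_{y\in\R^N}\int_{B_1(y)}|u|^s\Big)^{1-\frac{2}{s}},
$$
with $\|u\|^2$ rather than the $\|u\|$ appearing in the statement. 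This is not a defect of your proof: the statement as printed cannot hold with the first power, since replacing $u$ by $\lambda u$ makes the left-hand side scale like $\lambda^s$ and the right-hand side like $\lambda^{s-1}$. The form with $\|u\|^2$ is the one in \cite[Lemma 2.3]{MV2013} and is exactly what is used later (in the proof of Proposition \ref{compact} the lemma is applied to a sequence bounded in $H^1(\R^N)$, where either normalization yields the same conclusion), so the printed $\|u\|$ should be read as $\|u\|^2$ and your argument settles the correct statement.
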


\begin{lemma}\label{bogachev}{\sf (\cite[Proposition 4.7.12]{B2007})}

Let $s\in (1,\infty)$. Assume $(w_n)$ is a bounded sequence in $L^s(\R^N)$ that converges to $w$ almost everywhere. Then $w_n\rightharpoonup w$ weakly in $L^s(\R^N)$.
\end{lemma}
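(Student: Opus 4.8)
\medskip

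The plan is to combine Fatou's lemma, Vitali's convergence theorem on sets of finite measure, and a density argument in the dual space. Write $C:=\sup_n\|w_n\|_s<\infty$ and let $s'\in(1,\infty)$ be the conjugate exponent, $\frac1s+\frac1{s'}=1$; note that $s'<\infty$ precisely because $s>1$. First I would observe that, since $w_n\to w$ a.e., Fatou's lemma gives $\int_{\R^N}|w|^s\le\liminf_n\int_{\R^N}|w_n|^s\le C^s$, so $w\in L^s(\R^N)$ and the assertion $w_n\rightharpoonup w$ weakly in $L^s(\R^N)$ makes sense.

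Next I would fix a measurable set $E\subset\R^N$ with $|E|<\infty$ and show that $\int_E w_n\to\int_E w$. The family $(w_n)$ is uniformly integrable on $E$: for every measurable $A\subseteq E$, H\"older's inequality yields $\int_A|w_n|\le\|w_n\|_s|A|^{1/s'}\le C|A|^{1/s'}$, and the right-hand side tends to $0$ as $|A|\to0$, uniformly in $n$, because $s'<\infty$. Since moreover $w_n\to w$ a.e. on the finite-measure set $E$ (hence in measure on $E$), Vitali's convergence theorem gives $w_n\to w$ in $L^1(E)$, in particular $\int_E w_n\to\int_E w$. By linearity, $\int_{\R^N} w_n g\to\int_{\R^N} wg$ for every simple function $g$ which is a finite linear combination of characteristic functions of sets of finite measure.

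Finally I would extend this to arbitrary $g\in L^{s'}(\R^N)$. Such simple functions are dense in $L^{s'}(\R^N)$ because $s'<\infty$, so given $g\in L^{s'}(\R^N)$ and $\varepsilon>0$ there is such a $g_\varepsilon$ with $\|g-g_\varepsilon\|_{s'}<\varepsilon$; then, using $\|w_n\|_s\le C$ and $\|w\|_s\le C$,
$$
\Big|\int_{\R^N}(w_n-w)g\Big|\le\int_{\R^N}|w_n|\,|g-g_\varepsilon|+\Big|\int_{\R^N}(w_n-w)g_\varepsilon\Big|+\int_{\R^N}|w|\,|g-g_\varepsilon|\le 2C\varepsilon+\Big|\int_{\R^N}(w_n-w)g_\varepsilon\Big|.
$$
Letting $n\to\infty$ and then $\varepsilon\to0$ gives $\int_{\R^N} w_n g\to\int_{\R^N} wg$, i.e. $w_n\rightharpoonup w$ weakly in $L^s(\R^N)$.

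There is no serious obstacle here: the only substantive ingredient is the uniform integrability of $(w_n)$ on sets of finite measure, and this is immediate from the uniform $L^s$-bound together with $s>1$; everything else is the standard $3\varepsilon$/density routine. The hypothesis $s>1$ is used in exactly two places — to get equi-absolute continuity of the integrals (so that $|A|^{1/s'}\to0$) and to ensure that finitely supported simple functions are dense in the dual space $L^{s'}(\R^N)$ — and both break down when $s=1$, consistent with the fact that the statement is false in that case.
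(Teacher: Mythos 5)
Your proof is correct. Note that the paper itself offers no argument for this lemma: it is quoted verbatim from Bogachev's Measure Theory (Proposition 4.7.12), so there is no in-text proof to compare against; your write-up is essentially the standard textbook argument, carried out correctly. The chain Fatou $\Rightarrow$ $w\in L^s$, then H\"older $\Rightarrow$ uniform integrability on finite-measure sets, then Vitali $\Rightarrow$ $\int_E w_n\to\int_E w$, then density of finitely supported simple functions in $L^{s'}$ plus the $2C\varepsilon$ estimate, is complete and each step is justified (in particular you correctly use $s'<\infty$ both for the equi-absolute continuity and for the density step, and you correctly observe the failure at $s=1$, e.g. $n\chi_{(0,1/n)}$). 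A frequently seen alternative proof is shorter but less elementary: bounded sets in the reflexive space $L^s(\R^N)$, $1<s<\infty$, are weakly sequentially compact, so every subsequence of $(w_n)$ has a further weakly convergent subsequence, and the weak limit must equal $w$ a.e. (identify it by testing against characteristic functions of finite-measure sets, exactly as in your middle step, or via Mazur's lemma), whence the whole sequence converges weakly to $w$ by the subsequence principle. Your route buys a self-contained argument that avoids invoking reflexivity/Banach--Alaoglu; the compactness route buys brevity at the cost of heavier functional-analytic machinery. Either is acceptable in place of the citation.
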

Using a similar proof to that in the original Brezis-Lieb lemma \cite[Theorem 2]{BL1983} (see also \cite[Proposition 4.7.30]{W2013}) we have

\begin{lemma}\label{blc}{\sf (Local Brezis-Lieb lemma)}

Let $s\in (1,\infty)$. Assume $(w_n)$ is a bounded sequence in $L^s(\R^N)$ that converges to $w$ almost everywhere. Then, for every $q\in [1,s]$ we have
$$
\lim_{n\to\infty}\int_{\R^N}\big| |w_n|^q-|w_n-w|^q-|w|^q\big|^{\frac{s}{q}}=0\,,
$$
and
$$
\lim_{n\to\infty}\int_{\R^N}\big| |w_n|^{q-1}w_n-|w_n-w|^{q-1}(w_n-w)-|w|^{q-1}w\big|^{\frac{s}{q}}=0.
$$
\end{lemma}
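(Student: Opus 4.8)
The plan is to follow the strategy of the original Brezis-Lieb lemma \cite{BL1983}, but applied to the functions $|w_n|^q$ and $|w_n|^{q-1}w_n$ viewed as sequences in $L^{s/q}(\R^N)$, while tracking the extra integrability that the power $s/q$ provides. The key elementary input is the following pointwise bound: for every $\vp>0$ there is $C_\vp>0$ such that for all $a,b\in \R$,
$$
\big| |a+b|^q-|a|^q \big|^{s/q}\leq \vp |a|^{s}+C_\vp |b|^{s},
$$
and likewise with $|a+b|^q$ replaced by $|a+b|^{q-1}(a+b)$ and $|a|^q$ by $|a|^{q-1}a$. This follows from the $C^1$ (for $q>1$) or Lipschitz-on-bounded-sets (for $q=1$) nature of $t\mapsto |t|^q$ together with Young's inequality, exactly as in \cite[Theorem 2]{BL1983}; the only new point is that after raising the first-order difference to the power $s/q$ one still gets a bound by $|a|^s$ and $|b|^s$, which are integrable by hypothesis since $(w_n)$ is bounded in $L^s$ and hence $w\in L^s$ by Fatou.

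First I would prove the first identity. Set $a=w_n-w$, $b=w$, so that $|w_n|^q-|w_n-w|^q = \big(|a+b|^q-|a|^q\big)$, and write
$$
f_n:=\Big| |w_n|^q-|w_n-w|^q-|w|^q \Big|^{s/q}.
$$
Using the pointwise bound above with the triangle inequality in $\ell^{s/q}$ (or convexity), one gets
$$
f_n \leq C\Big( \big| |w_n|^q-|w_n-w|^q\big|^{s/q} + |w|^s \Big) \leq \vp\, C\, |w_n-w|^{s} + C_\vp\, |w|^{s}.
$$
Define $g_n^\vp := \big( f_n - \vp C |w_n-w|^{s}\big)^+$. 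Then $0\leq g_n^\vp \leq C_\vp |w|^s \in L^1(\R^N)$, and since $w_n\to w$ a.e. we have $f_n\to 0$ a.e., hence $g_n^\vp\to 0$ a.e.; by dominated convergence $\int g_n^\vp \to 0$. Since $f_n\leq g_n^\vp + \vp C |w_n-w|^{s}$ and $\sup_n \int |w_n-w|^s <\infty$ (boundedness of $(w_n)$ in $L^s$), we conclude $\limsup_n \int f_n \leq \vp C \sup_n\|w_n-w\|_s^s$. Letting $\vp\to 0$ gives $\int f_n\to 0$. The second identity is proved identically, replacing $|t|^q$ by the odd function $|t|^{q-1}t$ throughout; the same pointwise inequality holds because $t\mapsto |t|^{q-1}t$ is $C^1$ with derivative $q|t|^{q-1}$ for $q>1$ and Lipschitz on bounded sets for $q=1$.

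I do not expect a serious obstacle here, since the argument is a routine adaptation of \cite[Theorem 2]{BL1983}; the only point requiring a little care is checking the case $q=1$ separately (where $t\mapsto |t|$ is merely Lipschitz, not $C^1$, but the pointwise estimate still holds with the Lipschitz constant $1$) and making sure the exponent bookkeeping $s/q\geq 1$ is used precisely where convexity/Minkowski is invoked — which is exactly why the hypothesis $q\in[1,s]$ is imposed. One should also note that $w\in L^s(\R^N)$ automatically, by Fatou's lemma applied to the a.e. convergent, $L^s$-bounded sequence $(w_n)$, so that the dominating function $C_\vp|w|^s$ is indeed in $L^1$.
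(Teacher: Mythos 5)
Your argument is correct and is exactly what the paper has in mind: the paper gives no separate proof of Lemma~\ref{blc}, merely invoking the adaptation of the original Brezis--Lieb argument \cite[Theorem 2]{BL1983}, which is precisely the route you take (the pointwise estimate $\big||a+b|^q-|a|^q\big|^{s/q}\leq \vp|a|^s+C_\vp|b|^s$, the positive-part truncation, dominated convergence, and $\vp\to 0$). Your bookkeeping of the exponent $s/q\geq 1$, the Fatou argument giving $w\in L^s(\R^N)$, and the separate remark for $q=1$ are all sound, so nothing is missing.
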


A first nonlocal version of Bezis-Lieb lemma in the literature appeared in \cite{MV2013} (see also \cite{MMV2015}) and reads as follows.

\begin{lemma}\label{nlocbl}{\sf (Nonlocal Brezis-Lieb lemma, \cite[Lemma 2.4]{MV2013})}

Let $\alpha\in (0,N)$ and $p\in [1,\frac{2N}{N+\alpha})$. Assume $(u_n)$ is a bounded sequence in $L^{\frac{2Np}{N+\alpha}}(\R^N)$ that converges almost everywhere to  some $u:\R^N\to \R$. Then
$$
\lim_{n\to \infty}\int_{\R^N}\Big|(I_\alpha*|u_n|^p)|u_n|^p-(I_\alpha*|u_n-u|^p)|u_n-u|^p-(I_\alpha*|u|^p)|u|^p  \Big|=0.
$$
\end{lemma}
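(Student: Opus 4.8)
The plan is to reduce the nonlocal Brezis--Lieb statement to the local one (Lemma~\ref{blc}) combined with the Hardy--Littlewood--Sobolev inequality \eqref{hls2}. Write $w_n=|u_n|^p$, $w=|u|^p$, and $r_n=|u_n-u|^p$; all three sequences live in $L^{\frac{2N}{N+\alpha}}(\R^N)$ since $(u_n)$ is bounded in $L^{\frac{2Np}{N+\alpha}}(\R^N)$. Observe the algebraic identity
$$
(I_\alpha*w_n)w_n-(I_\alpha*r_n)r_n-(I_\alpha*w)w
=(I_\alpha*w_n)(w_n-r_n-w)+(I_\alpha*(w_n-r_n-w))r_n+(I_\alpha*w)(r_n+w).
$$
Wait --- the clean way is to split it as a sum of ``cross terms.'' First I would set $d_n:=w_n-r_n-w$ and decompose
$$
(I_\alpha*w_n)w_n-(I_\alpha*r_n)r_n-(I_\alpha*w)w
=(I_\alpha*d_n)w_n+(I_\alpha*r_n)d_n+\big[(I_\alpha*r_n)w+(I_\alpha*w)r_n\big],
$$
which one checks by expanding $w_n=d_n+r_n+w$ in the first product. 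The first two terms are handled directly by \eqref{hls2}: $\|(I_\alpha*d_n)w_n\|_1\le C\|d_n\|_{\frac{2N}{N+\alpha}}\|w_n\|_{\frac{2N}{N+\alpha}}$, and by Lemma~\ref{blc} (applied with exponent $s=\frac{2Np}{N+\alpha}$, $q=p$, so that $s/q=\frac{2N}{N+\alpha}$) we have $\|d_n\|_{\frac{2N}{N+\alpha}}\to 0$ while $\|w_n\|_{\frac{2N}{N+\alpha}}$ stays bounded; similarly $\|(I_\alpha*r_n)d_n\|_1\le C\|r_n\|_{\frac{2N}{N+\alpha}}\|d_n\|_{\frac{2N}{N+\alpha}}\to 0$.

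The remaining, genuinely nontrivial piece is the bracketed term $(I_\alpha*r_n)w+(I_\alpha*w)r_n$, whose $L^1$ norm does \emph{not} obviously vanish from HLS alone, because $\|r_n\|_{\frac{2N}{N+\alpha}}$ need not go to zero. Here I would use that $r_n=|u_n-u|^p\rightharpoonup 0$ weakly in $L^{\frac{2N}{N+\alpha}}(\R^N)$: indeed $u_n-u\to 0$ a.e.\ (passing to a subsequence if needed, though a.e.\ convergence of $u_n$ to $u$ is part of the hypothesis), so $r_n\to 0$ a.e., and being bounded in $L^{\frac{2N}{N+\alpha}}$ Lemma~\ref{bogachev} gives $r_n\rightharpoonup 0$. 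Since $I_\alpha*w\in L^{(\frac{2N}{N+\alpha})'}(\R^N)=L^{\frac{2N}{N-\alpha}}(\R^N)$ by \eqref{hls1}, the pairing $\intr(I_\alpha*w)r_n\to 0$. For the symmetric term, $\intr(I_\alpha*r_n)w=\intr r_n(I_\alpha*w)$ by the symmetry of the Riesz kernel, which is the same quantity and hence also tends to $0$. Combining the three estimates yields the claim.

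The main obstacle is precisely this last step: one must exploit weak convergence of $r_n$ rather than norm convergence, and the symmetry $\intr(I_\alpha*f)g=\intr f(I_\alpha*g)$, to dispose of the cross terms that pair $r_n$ against the fixed limit $w$. Everything else is a routine application of HLS \eqref{hls2} and the local Brezis--Lieb lemma. A minor technical point to watch is that Lemma~\ref{blc} is stated for $q\in[1,s]$; since $p\in[1,\frac{2N}{N+\alpha})$ we have $p<\frac{2Np}{N+\alpha}=s$, so the hypothesis $q=p\le s$ is satisfied, and the exponent bookkeeping $s/q=\frac{2N}{N+\alpha}$ is exactly what makes HLS applicable.
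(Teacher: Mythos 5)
The paper itself does not prove this lemma; it is quoted verbatim from \cite[Lemma 2.4]{MV2013}. Your argument is correct and is essentially the standard proof from that reference: the local Brezis--Lieb lemma (Lemma \ref{blc}) gives $d_n=|u_n|^p-|u_n-u|^p-|u|^p\to 0$ in $L^{\frac{2N}{N+\alpha}}(\R^N)$, the Hardy--Littlewood--Sobolev inequality \eqref{hls2} kills every term containing $d_n$, and the remaining cross terms are handled by weak convergence of $r_n=|u_n-u|^p$ (boundedness plus a.e.\ convergence, Lemma \ref{bogachev}) paired against $I_\alpha*|u|^p\in L^{\frac{2N}{N-\alpha}}(\R^N)$, which is the dual exponent by \eqref{hls1}. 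Two small repairs are needed. First, your displayed identity is not exact: expanding $w_n=d_n+r_n+w$ in $(I_\alpha*w_n)w_n$ also produces the term $(I_\alpha*w)d_n$, which you dropped; it is harmless, since by \eqref{hls2} (applied to $|d_n|$, using $|I_\alpha*d_n|\le I_\alpha*|d_n|$) its $L^1$ norm is bounded by $C\|w\|_{\frac{2N}{N+\alpha}}\|d_n\|_{\frac{2N}{N+\alpha}}\to 0$, exactly as for the other $d_n$-terms. Second, the conclusion of the lemma has the absolute value inside the integral, so for the cross terms you should say explicitly that $(I_\alpha*w)r_n$ and $(I_\alpha*r_n)w$ are nonnegative (as $I_\alpha\geq0$, $w\geq 0$, $r_n\geq0$); hence the convergences $\intr(I_\alpha*w)r_n\to 0$ and, by symmetry of the Riesz kernel and Tonelli, $\intr(I_\alpha*r_n)w\to 0$ are indeed convergences of their $L^1$ norms, which is what the statement requires. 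With these two clarifications your proof is complete and matches the cited argument in both ingredients and structure.
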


Below we state and prove another nonlocal version of Brezis-Lieb lemma.

\begin{lemma}\label{anbl}

Let $\alpha\in (0,N)$ and $p\in [1,\frac{2N}{N+\alpha})$. Assume $(u_n)$ is a bounded sequence in $L^{\frac{2Np}{N+2}}(\R^N)$ that converges almost everywhere to $u$. Then, for any $h\in L^{\frac{2Np}{N+\alpha}}(\R^N)$ we have
$$
\lim_{n\to \infty}\int_{\R^N}(I_\alpha*|u_n|^p)|u_n|^{p-2}u_nh=\int_{\R^N} (I_\alpha*|u|^p)|u|^{p-2}uh.
$$
\end{lemma}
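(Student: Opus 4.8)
The plan is to establish the convergence by a combination of the nonlocal Brezis-Lieb lemma (Lemma \ref{nlocbl}) together with a density/approximation argument for the test function $h$. First I would observe that it suffices to prove the statement for $h$ in a dense subset of $L^{\frac{2Np}{N+\alpha}}(\R^N)$, for instance $h\in C^\infty_0(\R^N)$, and then pass to the general case by a uniform continuity estimate. Indeed, by the Hardy-Littlewood-Sobolev inequality \eqref{hls2} with the pairing $\frac{N+2}{2Np}+\frac{1}{t}=1+\frac{\alpha}{N}$, the functional
$$
h\longmapsto \intr (I_\alpha*|w|^p)|w|^{p-2}w\,h
$$
is a bounded linear functional on $L^{\frac{2Np}{N+\alpha}}(\R^N)$ with norm controlled by $\|w\|_{\frac{2Np}{N+2}}^{2p-1}$; since $(u_n)$ is bounded in $L^{\frac{2Np}{N+2}}(\R^N)$, these functionals are uniformly bounded, and standard $\varepsilon/3$ reasoning reduces everything to a dense class of $h$.

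Next, for fixed $h$, I would apply Lemma \ref{nlocbl} in a slightly adapted form. The key identity to exploit is the algebraic decomposition
$$
(I_\alpha*|u_n|^p)|u_n|^{p-2}u_n - (I_\alpha*|u|^p)|u|^{p-2}u = \big[(I_\alpha*|u_n|^p)-(I_\alpha*|u|^p)\big]|u_n|^{p-2}u_n + (I_\alpha*|u|^p)\big[|u_n|^{p-2}u_n-|u|^{p-2}u\big],
$$
together with the Brezis-Lieb splittings for $|u_n|^p$ and for $|u_n|^{p-2}u_n$ provided by Lemma \ref{blc} (with $s=\frac{2Np}{N+\alpha}$ and exponent $p$, which is legitimate since $p\in[1,\frac{2N}{N+\alpha})$ and $(u_n)$ bounded in $L^{\frac{2Np}{N+2}}$ implies $|u_n|^{p}$ bounded in $L^{\frac{2N}{N+2}}$, hence $I_\alpha*|u_n|^p$ bounded in $L^{\frac{2N}{N-\alpha}}$ by \eqref{hls1}). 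After subtracting off the $u_n-u$ and $u$ pieces, each remaining term is a product of two factors, one of which converges to $0$ strongly in an appropriate Lebesgue space (from Lemma \ref{blc}) while the other stays bounded in the conjugate space; multiplying against the fixed $h\in C^\infty_0(\R^N)$, which lives in every $L^s$, closes the estimate. An alternative, perhaps cleaner, route: set $v_n=u_n-u$, note $v_n\to 0$ a.e. and is bounded in $L^{\frac{2Np}{N+2}}$, and show directly that $\intr (I_\alpha*|u_n|^p)|u_n|^{p-2}u_n h-\intr (I_\alpha*|u|^p)|u|^{p-2}u h\to 0$ by writing the difference as a sum of terms each containing a factor from the list $\{|v_n|^p, (I_\alpha*|v_n|^p), |v_n|^{p-2}v_n\}$ and invoking the weak convergence to $0$ of these (via Lemma \ref{bogachev}) paired against a strongly convergent or fixed factor.

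I expect the main obstacle to be bookkeeping the exponents so that every pairing is legitimate under Hardy-Littlewood-Sobolev and Hölder, especially in the borderline regime $p$ close to $\frac{2N}{N+\alpha}$ where some exponents degenerate; one must check that $|u_n|^{p-2}u_n$ (a quantity of "order $p-1$") lands in a space dual, after convolution, to where $|u_n|^p\cdot$(something) lives, and that $h\in L^{\frac{2Np}{N+\alpha}}$ is exactly the right integrability to absorb the leftover factor. A secondary subtlety is that $|u_n|^{p-2}u_n$ need not converge weakly in a single fixed $L^s$ without knowing boundedness there, so the reduction to $h\in C^\infty_0$ (which is in $L^\infty\cap L^1$) is what makes the argument go through cleanly; I would carry out that reduction first, before touching any of the Brezis-Lieb machinery.
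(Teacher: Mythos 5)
Your toolkit is the right one (Lemma \ref{blc}, Lemma \ref{bogachev}, the Hardy--Littlewood--Sobolev inequalities \eqref{hls1}--\eqref{hls2}), and the paper's own proof follows the same general scheme you sketch: reduce to $h\geq 0$, set $v_n=u_n-u$, expand $\intr(\ia*|u_n|^p)|u_n|^{p-2}u_nh$ via the Brezis--Lieb splittings of $|u_n|^p$ and of $|u_n|^{p-2}u_nh$, and treat each resulting term by pairing a strongly convergent factor against a weakly convergent one. The genuine gap in your proposal is the ``diagonal'' term $\intr(\ia*|v_n|^p)|v_n|^{p-2}v_nh$, which inevitably appears in any such expansion: there \emph{both} factors $\ia*|v_n|^p$ and $|v_n|^{p-2}v_n$ are only weakly null, so your governing principle (``weak convergence to $0$ paired against a strongly convergent or fixed factor'') does not apply, and Lemma \ref{blc}, to which you attribute all the strong convergences, does not produce one here. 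This term is precisely the crux of the paper's argument: by H\"older and \eqref{hls2} with $s=t=\frac{2N}{N+\alpha}$ it is bounded by $C\,\big\||v_n|^{p-1}h\big\|_{\frac{2N}{N+\alpha}}$, and this tends to $0$ because $|v_n|^{\frac{2N(p-1)}{N+\alpha}}\rightharpoonup 0$ weakly in $L^{\frac{p}{p-1}}(\R^N)$ (Lemma \ref{bogachev}) while $|h|^{\frac{2N}{N+\alpha}}$ is a \emph{fixed} element of $L^{p}(\R^N)$. Your two-term decomposition hides the same issue: to kill $\intr\big[\ia*(|u_n|^p-|u|^p)\big]|u_n|^{p-2}u_nh$ you need $|u_n|^{p-2}u_nh\to|u|^{p-2}uh$ strongly in $L^{\frac{2N}{N+\alpha}}$, which is equivalent (via the Brezis--Lieb splitting) to exactly the same statement $\||v_n|^{p-1}h\|_{\frac{2N}{N+\alpha}}\to 0$. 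Until that step is supplied, the proof is not complete.

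Two further remarks. First, the reduction to $h\in C^\infty_0(\R^N)$ is not needed (the weak-pairing argument above works directly for fixed $h\in L^{\frac{2Np}{N+\alpha}}$), although it could be used to rescue the diagonal term by Vitali's theorem on the compact support of $h$, since $|v_n|^{\frac{2N(p-1)}{N+\alpha}}$ is bounded in $L^{\frac{p}{p-1}}$, hence uniformly integrable on bounded sets, and tends to $0$ a.e.; but this has to be said explicitly, and it is not what Lemma \ref{blc} gives you. Second, your exponent bookkeeping mixes $\frac{2Np}{N+2}$ with $\frac{2Np}{N+\alpha}$: the boundedness hypothesis should be read as boundedness in $L^{\frac{2Np}{N+\alpha}}(\R^N)$ (this is what the paper's proof actually uses, e.g.\ $\|v_n\|_{\frac{2Np}{N+\alpha}}\leq C$ in \eqref{est03}), whereas with the exponent $\frac{2Np}{N+2}$ your claimed uniform bound $\|w\|_{\frac{2Np}{N+2}}^{2p-1}$ does not close under HLS/H\"older against $h\in L^{\frac{2Np}{N+\alpha}}$ except when $\alpha=2$, and \eqref{hls1} sends $L^{\frac{2N}{N+2}}$ into $L^{\frac{2N}{N+2-2\alpha}}$, which coincides with $L^{\frac{2N}{N-\alpha}}$ only in that same case.
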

\begin{proof} Using $h=h^+-h^-$, it is enough to prove our lemma for $h\geq 0$. 
Denote $v_n=u_n-u$ and observe that
\begin{equation}\label{bl1}
\begin{aligned}
\intr (\ia*|u_n|^p)|u_n|^{p-2}u_nh=& \intr [\ia*(|u_n|^p-|v_n|^p)](|u_n|^{p-2}u_nh-|v_n|^{p-2}v_nh)\\
&+\intr [\ia*(|u_n|^p-|v_n|^p)]|v_n|^{p-2}v_nh\\
&+\intr [\ia*(|u_n|^{p-2}u_nh-|v_n|^{p-2}v_n h)|v_n|^p\\
&+\intr (\ia*|v_n|^p)|v_n|^{p-2}v_nh.
\end{aligned}
\end{equation}
Apply Lemma \ref{blc} with $q=p$, $s=\frac{2Np}{N+\alpha}$ by taking respectively $(w_n,w)=(u_n,u)$ and then $(w_n,w)=(u_nh^{1/p}, u h^{1/p})$. We find
$$
\left\{
\begin{aligned}
&|u_n|^p-|v_n|^p\to |u|^p \\
&|u_n|^{p-2}u_nh-|v_n|^{p-2}v_nh\to |u|^{p-2}uh
\end{aligned}
\right.
\quad\mbox{ strongly in }\; L^{\frac{2N}{N+\alpha}}(\R^N).
$$
Using now the Hardy-Littlewood-Sobolev inequality \eqref{hls1} we obtain
\begin{equation}\label{est00}
\left\{
\begin{aligned}
&\ia*(|u_n|^p-|v_n|^p)\to \ia*|u|^p \\
&\ia*(|u_n|^{p-2}u_nh-|v_n|^{p-2}v_nh)\to \ia*(|u|^{p-2}uh)
\end{aligned}
\right.
\quad\mbox{ strongly in }\; L^{\frac{2N}{N-\alpha}}(\R^N).
\end{equation}
Also, by Lemma \ref{bogachev} we have
\begin{equation}\label{est01}
|u_n|^{p-2}u_n h\rightharpoonup |u|^{p-2}uh,\; |v_n|^p\rightharpoonup 0,\; |v_n|^{p-2}v_nh\rightharpoonup 0\quad \mbox{ weakly in }\; L^{\frac{2N}{N+\alpha}}(\R^N).
\end{equation}
Combining \eqref{est00}-\eqref{est01} we find
\begin{equation}\label{est02}
\left\{
\begin{aligned}
&\lim_{n\to \infty} \intr [\ia*(|u_n|^p-|v_n|^p)](|u_n|^{p-2}u_nh-|v_n|^{p-2}v_n h)=\int_{\R^N} (I_\alpha*|u|^p)|u|^{p-2}uh,\\
&\lim_{n\to \infty} \intr [\ia*(|u_n|^p-|v_n|^p)]|v_n|^{p-2}v_nh=0,\\
&\lim_{n\to \infty} \intr [\ia*(|u_n|^{p-2}u_nh-|v_n|^{p-2}v_nh)|v_n|^p=0.
\end{aligned}
\right.
\end{equation}
By H\"older's inequality and Hardy-Littlewood-Sobolev inequality \eqref{hls2} with $s=t=\frac{2N}{N+\alpha}$  we have
\begin{equation}\label{est03}
\begin{aligned}
\left| \intr (\ia*|v_n|^{p}))|v_n|^{p-2}v_nh \right|
&\leq \|v_n\|^p_{\frac{2Np}{N+\alpha}}\||v_n|^{p-1}h\|_{\frac{2N}{N+\alpha}}\\
&\leq C \||v_n|^{p-1}h\|_{\frac{2N}{N+\alpha}}.
\end{aligned}
\end{equation}
On the other hand, by Lemma \ref{bogachev} we have $v_n^{\frac{2N(p-1)}{N+\alpha}}\rightharpoonup 0$ weakly in 
$L^{\frac{p}{p-1}}(\R^N)$ so 
$$
\||v_n|^{p-1}h\|_{\frac{2N}{N+\alpha}}=\left(\intr |v_n|^{\frac{2N(p-1)}{N+\alpha}}|h|^{\frac{2N}{N+\alpha}}  \right)^{\frac{N+\alpha}{2N}}\to 0.
$$
Thus, from \eqref{est03}  have
\begin{equation}\label{est04}
\lim_{n\to \infty} \intr (\ia*|v_n|^{p}))|v_n|^{p-2}v_nh=0.
\end{equation}
Passing to the limit in \eqref{bl1}, from \eqref{est02} and \eqref{est04} we reach the conclusion.
\end{proof}

\section{Proof of Theorem \ref{nonex}}

\subsection{A Pohozaev identity}

The main tool in proving Theorem \ref{nonex} is the following Pohozaev type identity.

\begin{proposition}\label{poh}
Let $(u,\phi)$ be a solution of \eqref{sp} that satisfies \eqref{reg1}-\eqref{reg2}.  Then
$$
\intr \!\Big(\frac{N-2}{2}|\nabla u|^2+\frac{N}{2}|u|^2\Big)+\frac{N+2}{2q}\intr K(x)\phi |u|^q+\frac{1}{q}\intr \phi |u|^q x\cdot \nabla K(x)=\frac{N+\alpha}{2p}\intr(\ia*|u|^p)|u|^p.
$$
\end{proposition}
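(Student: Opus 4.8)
The plan is to take $\psi_R\,(x\cdot\nabla u)$ as a test function in the weak form of the first equation of \eqref{sp}, where $\psi_R(x)=\psi(x/R)$ with $\psi\in C^\infty_0(\R^N)$, $\psi\equiv 1$ on $B_1$ and $\psi\equiv 0$ outside $B_2$, and then to let $R\to\infty$. The local regularity $|\nabla u|\in H^1_{loc}(\R^N)$ in \eqref{reg2} gives $u\in H^2_{loc}(\R^N)$, so $x\cdot\nabla u\in H^1_{loc}(\R^N)$ and $\psi_R(x\cdot\nabla u)\in H^1(\R^N)$ has compact support; together with $u\in L^{\frac{2Np}{N+\alpha}}(\R^N)$, $\nabla u\in L^{\frac{2Np}{N+\alpha}}_{loc}(\R^N)$ and \eqref{hls2}, every integral appearing below is finite. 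Before starting I would also record that, since $\phi\in H^1(\R^N)\subset\cl{D}^{1,2}(\R^N)$ solves $-\Delta\phi=K(x)|u|^q$ with $K|u|^q\in L^{\frac{2N}{N+2}}(\R^N)$, uniqueness in the Lax--Milgram characterization \eqref{deft} forces $\phi=\phi_u=I_2*(K|u|^q)$; this representation is what makes the Poisson term computable.

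The elementary terms are standard. Testing $-\Delta u$ against $\psi_R(x\cdot\nabla u)$ and integrating by parts (legitimate because $u\in H^2_{loc}$) gives $\intr(-\Delta u)\psi_R(x\cdot\nabla u)=-\tfrac{N-2}{2}\intr\psi_R|\nabla u|^2$, and $\intr u\,\psi_R(x\cdot\nabla u)=-\tfrac N2\intr\psi_R|u|^2$, each up to correction terms carrying a factor $\nabla\psi_R$; since $|x\cdot\nabla\psi_R|\le C$ and $\nabla\psi_R$ is supported in $\{R\le|x|\le 2R\}$, these corrections vanish as $R\to\infty$ because $u\in H^1(\R^N)$, so the two contributions converge to $-\tfrac{N-2}{2}\|\nabla u\|_2^2$ and $-\tfrac N2\|u\|_2^2$. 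For the coupling term I would write $|u|^{q-2}u(x\cdot\nabla u)=\tfrac1q x\cdot\nabla(|u|^q)$ and integrate by parts to get
$$\intr K\phi|u|^{q-2}u\,\psi_R(x\cdot\nabla u)=-\frac1q\intr\big(NK\phi\psi_R+\psi_R\phi\,x\cdot\nabla K+K\psi_R\,x\cdot\nabla\phi+K\phi\,x\cdot\nabla\psi_R\big)|u|^q.$$
Here $K\phi|u|^q\in L^1(\R^N)$ by \eqref{reg1}--\eqref{reg2} and H\"older with exponents $\tfrac{2N}{N+2},\tfrac{2N}{N-2}$, so the $\nabla\psi_R$--term drops out in the limit, and this contribution tends to $-\tfrac Nq\intr K\phi|u|^q-\tfrac1q\intr\phi|u|^q\,x\cdot\nabla K-\tfrac1q\intr K|u|^q\,x\cdot\nabla\phi$ (the finiteness of the middle integral can also be read off a posteriori from the final identity, once every other term is shown to converge).

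The two genuinely nonlocal quantities, $\intr(\ia*|u|^p)|u|^{p-2}u(x\cdot\nabla u)$ and $\intr K|u|^q\,x\cdot\nabla\phi$, I would handle by the same device, exploiting the homogeneity of the Riesz kernels. Writing the first, with $v=|u|^p$, as a double integral and symmetrizing in $x\leftrightarrow y$ (using that $\nabla I_\alpha$ is odd) replaces $x\cdot\nabla_x I_\alpha(x-y)$ by $\tfrac12(x-y)\cdot\nabla I_\alpha(x-y)=-\tfrac{N-\alpha}{2}I_\alpha(x-y)$, the last equality being Euler's relation for the $(\alpha-N)$--homogeneous kernel; equivalently, differentiating $\lambda\mapsto\intr(\ia*v(\cdot/\lambda))v(\cdot/\lambda)=\lambda^{N+\alpha}\intr(\ia*v)v$ at $\lambda=1$. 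Either way,
$$\intr(\ia*|u|^p)|u|^{p-2}u(x\cdot\nabla u)=-\frac{N+\alpha}{2p}\intr(\ia*|u|^p)|u|^p,$$
and the identical computation with the $(2-N)$--homogeneous kernel $I_2$ and $\phi=I_2*(K|u|^q)$ gives $\intr K|u|^q\,x\cdot\nabla\phi=-\tfrac{N-2}{2}\intr K|u|^q\phi$. Substituting this into the coupling contribution and using $-\tfrac Nq+\tfrac{N-2}{2q}=-\tfrac{N+2}{2q}$, then collecting the limits of all terms and multiplying the resulting equality by $-1$, yields exactly the asserted identity.

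The main obstacle is the rigorous justification of these manipulations under the weak hypotheses \eqref{reg1}--\eqref{reg2}: that $\psi_R(x\cdot\nabla u)$ is an admissible test function and that the integration by parts against $\Delta u$ is licit --- this is precisely why $|\nabla u|\in H^1_{loc}$ is assumed --- and, above all, the double--integral integration by parts and symmetrization for the Choquard term, since $\nabla I_\alpha$ need not be locally integrable when $\alpha\le 1$. I would circumvent the last point by first establishing $\intr(\ia*v)(x\cdot\nabla v)=-\tfrac{N+\alpha}{2}\intr(\ia*v)v$ for $v\in C^\infty_0(\R^N)$ via the scaling argument above (which never touches $\nabla I_\alpha$), and then reproducing the computation for $v=|u|^p$ at the cut--off level, controlling each integral through \eqref{hls1}--\eqref{hls2} and the local bound $\nabla u\in L^{\frac{2Np}{N+\alpha}}_{loc}(\R^N)$ before passing to the limit $R\to\infty$. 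The Poisson term is analogous but softer, $I_2$ being only weakly singular; alternatively it follows from $-\Delta\phi=K|u|^q$ combined with $\intr|\nabla\phi|^2=\intr K|u|^q\phi$.
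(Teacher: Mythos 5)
Your proposal is correct and follows essentially the same route as the paper: test the first equation with a dilated cutoff times $x\cdot\nabla u$ (your $\psi_R=\psi(\cdot/R)$ is the paper's $\varphi(\lambda\cdot)$ with $\lambda=1/R$), use the representation $\phi=I_2*(K|u|^q)$, and evaluate the nonlocal terms by symmetrizing the double integrals and invoking the homogeneity of $I_\alpha$ and $I_2$ before passing to the limit. The only (harmless) difference is bookkeeping in the coupling term: you first integrate by parts locally and then treat $\intr K|u|^q\,x\cdot\nabla\phi$ by the kernel symmetrization, whereas the paper symmetrizes the whole term $\intr K\phi|u|^{q-2}u\,v_\lambda$ at once, which also avoids ever forming the product $|u|^q\nabla\phi$; both yield the same coefficients $-\frac{N+2}{2q}$ and $-\frac1q$.
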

\begin{proof} Let $\varphi\in C^1_c(\R^N)$ be such that $\varphi\equiv 1$ on $B_1(0)$. For $\lambda>0$ set 
$$
v_\lambda(x)=\varphi(\lambda x)x\cdot \nabla u.
$$
Then $v_\lambda\in W^{1,2}(\R^N)\cap L^{\frac{2Np}{N+\alpha}}(\R^N)$ and from the first equation of \eqref{sp} we have
\begin{equation}\label{test}
\intr \nabla u\cdot \nabla v_\lambda+\intr uv_\lambda+\intr K(x)\phi |u|^{q-2}uv_\lambda=\intr (\ia*|u|^p)|u|^{p-2}uv_\lambda.
\end{equation}
Let us next analyse term by term the above equation.
Since $u\in W^{2,2}_{loc}(\R^N)$ we have
$$
\begin{aligned}
\intr \nabla u\cdot \nabla v_\lambda&=\intr \varphi(\lambda x)\Big[|\nabla u|^2+x\cdot \nabla\Big(\frac{|\nabla u|^2}{2}\Big)(x)\Big]dx\\
&=-\intr \big[\lambda x\cdot \nabla \varphi(\lambda x)+(N-2) \varphi(\lambda x)\big] \frac{|\nabla u(x)|^2}{2}dx.
\end{aligned}
$$
By Lebesgue dominated convergence theorem, we find
$$
\lim_{\lambda\to 0} \intr \nabla u\cdot \nabla v_\lambda=-\frac{N-2}{2}\intr|\nabla u|^2.
$$
Next,
$$
\begin{aligned}
\intr  u v_\lambda &=\intr u(x) \varphi(\lambda x) x\cdot \nabla u(x)dx\\
&=\intr \varphi(\lambda x)x\cdot \nabla\Big( \frac{|u|^2}{2} \Big)(x)dx\\
&=-\intr \big[\lambda x\cdot \nabla \varphi(\lambda x)+N \varphi(\lambda x)\big] \frac{| u(x)|^2}{2}dx.
\end{aligned}
$$
Again by Lebesgue dominated convergence theorem we deduce
$$
\lim_{\lambda\to 0} \intr u v_\lambda=-\frac{N}{2}\intr| u|^2.
$$
Further we have
$$
\begin{aligned}
\intr (\ia& *|u|^p)|u|^{p-2}uv_\lambda \\
=&\frac{1}{p}\intr\intr \ia(x-y)|u|^p(y) \varphi(\lambda x) x\cdot \nabla(|u|^p)(x)dxdy\\
=&\frac{1}{2p}\intr\intr \ia(x-y)\Big\{ |u|^p(y) \varphi(\lambda x) x\cdot \nabla(|u|^p)(x)+
|u|^p(x) \varphi(\lambda y) y\cdot \nabla(|u|^p)(y)\Big\}dxdy\\
=&-\frac{1}{p}\intr\intr \ia(x-y)|u|^p(x)|u|^p(y)\Big[N\varphi(\lambda x)+\lambda x\cdot \nabla \varphi(\lambda x)\Big] dxdy\\
&+\frac{N-\alpha}{2p}\intr\intr \frac{(x-y)(x\varphi(\lambda x)-y\varphi(\lambda y))}{|x-y|^2}\ia(x-y) |u|^p(x)|u|^p(y)dx dy,
\end{aligned}
$$
which yields
$$
\lim_{\lambda\to 0} \intr (\ia *|u|^p)|u|^{p-2}uv_\lambda =-\frac{N+\alpha}{2p}\intr(\ia*|u|^p)|u|^p.
$$
Note that the regularity of $K,u$, $\phi$ and the second equation of \eqref{sp} allow us to derive
$$
\phi=I_2*(K|u|^q).
$$ 
Thus, we have
$$
\begin{aligned}
\intr & K(x)\phi |u|^{q-2}uv_\lambda\\
=& \frac{1}{q}\intr (I_2*K|u|^q)K(x) \varphi(\lambda x) x\cdot\nabla(|u|^q) dx\\
=&\frac{1}{q}\intr\intr I_2(x-y)K(x)K(y)|u|^q(y) \varphi(\lambda x) x\cdot \nabla(|u|^q)(x)dxdy\\
=&\frac{1}{2q}\intr\intr I_2(x-y)K(x)K(y)\Big\{ |u|^q(y) \varphi(\lambda x) x\cdot \nabla(|u|^q)(x)+
|u|^q(x) \varphi(\lambda y) y\cdot \nabla(|u|^q)(y)\Big\}dxdy\\
=&-\frac{1}{q}\intr\intr I_2(x-y)K(y) |u|^q(x) |u|^q(y)\Big[NK(x) \varphi(\lambda x)+K(x) \lambda x\cdot \nabla \varphi(\lambda x)+\varphi(\lambda x) x\cdot \nabla K(x)\Big] dxdy\\
&+\frac{N-2}{2q}\intr\intr \frac{(x-y)(x\varphi(\lambda x)-y\varphi(\lambda y))}{|x-y|^2} I_2(x-y)K(x)K(y) |u|^p(x)|u|^p(y)dx dy\\
=&-\frac{1}{q}\intr \phi(x) |u|^q(x) \Big[NK(x) \varphi(\lambda x)+K(x) \lambda x\cdot \nabla \varphi(\lambda x)+\varphi(\lambda x) x\cdot \nabla K(x)\Big] dx\\
&+\frac{N-2}{2q}\intr\intr \frac{(x-y)(x\varphi(\lambda x)-y\varphi(\lambda y))}{|x-y|^2} I_2(x-y)K(x)K(y) |u|^p(x)|u|^p(y)dx dy.
\end{aligned}
$$
We obtain
$$
\lim_{\lambda\to 0}\intr  K(x)\phi |u|^{q-2}uv_\lambda=-\frac{N+2}{2q}\intr K(x)\phi |u|^q-\frac{1}{q}\intr \phi |u|^q x\cdot \nabla K(x).
$$
Passing now to the limit in \eqref{test} we obtain the conclusion.
\end{proof}

\subsection{Proof of Theorem \ref{nonex} completed}

Let $(u,\phi)$ be a solution of \eqref{sp} which satisfies \eqref{reg1}-\eqref{reg2}. It is enough to show that $u\equiv 0$ as the second equation  of \eqref{sp} together with $\phi\in H^1(\R^N)$ will imply $\phi\equiv 0$. Suppose by contradiction that the solution $(u,\phi)$ satisfies $u\not\equiv 0$.

For convenience, let us denote
\begin{equation}\label{notation}
A(u)=\intr K(x)\phi |u|^q\,,\;\; B(u)=\intr\phi |u|^q \; x\cdot \nabla K(x)\,,\quad C(u)=\intr (\ia*|u|^p)|u|^p.
\end{equation}
From Proposition \ref{poh} we have
\begin{equation}\label{poh1}
\frac{N-2}{2}\|\nabla u\|_2^2+\frac{N}{2}\|u\|_2^2+\frac{N+2}{2q}A(u)+\frac{1}{q}B(u)=\frac{N+\alpha}{2p}C(u).
\end{equation}
Since $u$ is a solution of \eqref{ssp} we also have
\begin{equation}\label{poh2}
C(u)=\|u\|^2+A(u).
\end{equation}

(i) Assume $x\cdot\nabla K(x)+\gamma K(x)\geq 0$ in $\R^N$ for some $\gamma\in (-\infty,\frac{N+2}{2})$ and that \eqref{nex1} holds. Then
$$
B(u)\geq -\gamma A(u)
$$
so that from \eqref{poh1} and \eqref{poh2} we obtain
$$
\frac{N+\alpha}{2p}\Big(\|u\|^2+A(u)\Big)>\frac{N-2}{2}\|u\|^2+\frac{N+2-2\gamma}{2q}A(u)
$$
that is,
$$
\frac{N+\alpha-p(N-2)}{p}\|u\|^2>\Big( \frac{N+2-2\gamma}{q}-\frac{N+\alpha}{p}\Big)A(u).
$$
But this last inequality is impossible since $\|u\|> 0$, $A(u)\geq 0$ and $p,q,N,\alpha,\gamma$ satisfy \eqref{nex1}.

(ii) Assume $x\cdot\nabla K(x)+\gamma K(x)\leq 0$ in $\R^N$ for some $\gamma\in \R$ and that \eqref{nex2} holds. It follows that
$$
B(u)\leq -\gamma A(u)
$$
so that \eqref{poh1} together with \eqref{poh2} yield
$$
\frac{N+\alpha}{2p}\Big(\|u\|^2+A(u)\Big)<\frac{N}{2}\|u\|^2+\frac{N+2-2\gamma}{2q}A(u)
$$
that is,
$$
\frac{N+\alpha-pN}{p}\|u\|^2<\Big( \frac{N+2-2\gamma}{q}-\frac{N+\alpha}{p}\Big)A(u).
$$
Note that the above inequality is impossible since $\|u\|>0$, $A(u)\geq 0$ and $p,q,N,\alpha,\gamma$ satisfy \eqref{nex2}.
This concludes our proof.

\section{Proof of Theorem \ref{existence}}

\subsection{The Nehari manifold associated with \eqref{ssp}}
Define the Nehari manifold associated with $\mj$  as
\begin{equation}\label{nm}
{\mathcal N}=\{u\in H^1(\R^N)\setminus\{0\}: \langle \mj'(u),u\rangle=0\}
\end{equation}
and let 
$$
m_{\mj}=\inf_{u\in {\mathcal N}}\mj(u).
$$

Remark that for $u\in H^1(\R^N)\setminus\{0\}$ and $t>0$ we have
$$
\langle \mj'(tu),tu\rangle=t^2\|u\|^2+t^{2q}\intr K(x)\phi_u |u|^{q-2}u-t^{2p}\intr (\ia*|u|^p)|u|^p.
$$
Since $p>q>1$, the equation $\langle \mj'(tu),tu\rangle=0$ has a unique positive solution $t=t(u)$ and the corresponding element $t(u)u\in {\mathcal N}$ is called the {\it projection of $u$} on ${\mathcal N}$. The main properties of the Nehari manifold which we use in this paper are stated below.

\begin{proposition}\label{nehari}
\begin{enumerate}
\item[(i)] $\mj\!\mid_{\mathcal N}$ is bounded from below by a positive constant;
\item[(ii)] If $u$ is a critial point of $\mj$ in ${\mathcal N}$ then $u$ is a free critical point of $\mj$;
\end{enumerate}
\end{proposition}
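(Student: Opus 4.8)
The plan is to exploit the homogeneities of the three terms building $\mj$: the map $u\mapsto\|u\|^2$ is quadratic, $A(u):=\intr K(x)\phi_u|u|^q$ is $2q$-homogeneous (by Lemma \ref{phiu}(ii) one has $\phi_{tu}=t^q\phi_u$, so $\intr K\phi_{tu}|tu|^q=t^{2q}A(u)$), and $C(u):=\intr(\ia*|u|^p)|u|^p$ is $2p$-homogeneous. Hence on ${\mathcal N}$ the identity $\langle\mj'(u),u\rangle=0$ reads $\|u\|^2+A(u)=C(u)$, and substituting this back one obtains
$$
\mj(u)=\Big(\frac12-\frac1{2p}\Big)\|u\|^2+\Big(\frac1{2q}-\frac1{2p}\Big)A(u)\qquad\text{for all }u\in{\mathcal N}.
$$

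For (i): since $p>q>1$ both coefficients above are positive and $A(u)\geq0$ by Lemma \ref{phiu}(i), so $\mj(u)\geq\big(\tfrac12-\tfrac1{2p}\big)\|u\|^2$ on ${\mathcal N}$; it then suffices to bound $\|u\|$ away from $0$ on ${\mathcal N}$. I would combine $\|u\|^2\leq\|u\|^2+A(u)=C(u)$ with the Hardy--Littlewood--Sobolev inequality \eqref{hls1} (with $s=\frac{2N}{N+\alpha}$) and the continuous embedding $H^1(\R^N)\hookrightarrow L^{\frac{2Np}{N+\alpha}}(\R^N)$ --- legitimate because $\frac{N+\alpha}{N}<p<\frac{N+\alpha}{N-2}$ forces $\frac{2Np}{N+\alpha}\in(2,2^*)$ --- to get $\|u\|^2\leq C\|u\|^{2p}$, whence $\|u\|\geq\rho:=C^{-1/(2p-2)}>0$ and therefore $\mj(u)\geq\big(\tfrac12-\tfrac1{2p}\big)\rho^2>0$ for every $u\in{\mathcal N}$.

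For (ii): set $G(u)=\langle\mj'(u),u\rangle=\|u\|^2+A(u)-C(u)$, so that ${\mathcal N}=G^{-1}(0)\setminus\{0\}$, and recall that under the hypotheses of Theorem \ref{existence} both $\mj$ and $G$ are of class $C^1$ on $H^1(\R^N)$. Using the homogeneities once more and then $C(u)=\|u\|^2+A(u)$ on ${\mathcal N}$, I compute
$$
\langle G'(u),u\rangle=2\|u\|^2+2qA(u)-2pC(u)=2(1-p)\|u\|^2+2(q-p)A(u)<0
$$
for every $u\in{\mathcal N}$, because $p>1$, $p>q$, $A(u)\geq0$ and $\|u\|\geq\rho>0$. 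In particular $G'(u)\neq0$ in $H^{-1}(\R^N)$, so the Lagrange multiplier rule yields $\lambda\in\R$ with $\mj'(u)=\lambda G'(u)$ whenever $u$ is a critical point of $\mj\!\mid_{\mathcal N}$; pairing with $u$ gives $0=\langle\mj'(u),u\rangle=\lambda\langle G'(u),u\rangle$, and since $\langle G'(u),u\rangle\neq0$ we conclude $\lambda=0$, i.e.\ $\mj'(u)=0$, which is the assertion of (ii).

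The only point requiring care --- and I do not expect it to be a genuine obstacle --- is checking that $\mj,G\in C^1(H^1(\R^N))$ and differentiating the $\phi_u$-term correctly; as in the proof of Proposition \ref{poh} this is handled by symmetrising the double integral $\intr\intr I_2(x-y)K(x)K(y)|u(x)|^q|u(y)|^q\,dx\,dy$, and once the exponent $\frac{2Np}{N+\alpha}$ is seen to lie strictly between $2$ and $2^*$ everything else reduces to the elementary algebra displayed above.
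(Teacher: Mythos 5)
Your proposal is correct and follows essentially the same route as the paper: the bound $\|u\|\geq C_0>0$ on ${\mathcal N}$ via Hardy--Littlewood--Sobolev and the embedding $H^1(\R^N)\hookrightarrow L^{\frac{2Np}{N+\alpha}}(\R^N)$, the identity $\mj(u)=\big(\tfrac12-\tfrac1{2p}\big)\|u\|^2+\big(\tfrac1{2q}-\tfrac1{2p}\big)A(u)$ for (i), and the Lagrange multiplier argument with $\langle{\mathcal G}'(u),u\rangle<0$ for (ii). The only (immaterial) difference is that in (ii) you eliminate the convolution term using $C(u)=\|u\|^2+A(u)$, obtaining $2(1-p)\|u\|^2+2(q-p)A(u)<0$, whereas the paper eliminates $A(u)$ and gets $2(1-q)\|u\|^2-2(p-q)C(u)<0$; both give the required strict negativity.
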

\begin{proof} (i) Using the Hardy-Littlewood-Sobolev inequality \eqref{hls2} together with the continuous embedding $H^1(\R^N) 	\hookrightarrow L^{\frac{2Np}{N+\alpha}}(\R^N)$, for any $u\in {\mathcal N}$ we have
$$
\begin{aligned}
0=\langle \mj'(u),u\rangle& =\|u\|^2+\intr K(x)\phi_u |u|^{q}-\intr (\ia*|u|^p)|u|^p\\
&\geq \|u\|^2-C\|u\|^{2p}.
\end{aligned}
$$
Hence, there exists $C_0>0$ such that
\begin{equation}\label{cnot}
\|u\|\geq C_0>0\quad\mbox{for all }u\in {\mathcal N}.
\end{equation}
Using this fact we have
$$
\begin{aligned}
\mj(u)&=\Big(\frac{1}{2}-\frac{1}{2p}\Big)\|u\|^2+\Big(\frac{1}{2q}-\frac{1}{2p}\Big)\intr K(x)\phi_u |u|^q\\
&\geq \Big(\frac{1}{2}-\frac{1}{2p}\Big) C_0^2>0.
\end{aligned}
$$

(ii) For $u\in H^1(\R^N)$ let ${\mathcal G}(u)=\langle \mj'(u),u\rangle$. If $u\in {\mathcal N}$, by \eqref{cnot} we obtain
\begin{equation}\label{cnot1}
\begin{aligned}
\langle {\mathcal G}'(u),u\rangle&=2\|u\|^2+2q\intr K(x)\phi_u |u|^{q}-2p\intr (\ia*|u|^p)|u|^p\\
&=2(1-q)\|u\|^2-2(p-q)\intr (\ia*|u|^p)|u|^p\\
&\leq -2(q-1)\|u\|^2\\
&<-2(q-1)C_0.
\end{aligned}
\end{equation}
Assume now that $u\in {\mathcal N}$ is a critical point of $\mj$ in ${\mathcal N}$.  By the Lagrange multiplier theorem, there exists $\lambda\in \R$ such that
$\mj'(u)=\lambda {\mathcal G}'(u)$. 
In particular $\langle \mj '(u),u\rangle=\lambda \langle {\mathcal G}'(u),u\rangle$. Since $\langle {\mathcal G}'(u),u\rangle<0$, it follows that $\lambda=0$ so $\mj '(u)=0$.

\end{proof}

\subsection{A compactness result}\label{compc}
Let
$$
{\mathcal E}:H^1(\R^N)\to \R,\quad {\mathcal E}(u)=\frac{1}{2}\intr(|\nabla u|^2+|u|^2)-\frac{1}{2p}\intr (\ia*|u|^p)|u|^p,
$$
be the energy functional corresponding to \eqref{gstate}. Also, consider its Nehari manifold
$$
{\mathcal N}_{\mathcal E}=\{u\in H^1(\R^N)\setminus\{0\}: \langle \me'(u),u\rangle=0\}
$$
and let 
$$
m_{\me}=\inf_{u\in {\mathcal N}_{\mathcal E}}\me(u).
$$
\begin{proposition}\label{compact}
Let $(u_n)\subset{\mathcal N}$ be a $(PS)$ sequence of $\mj\!\mid_{\mathcal N}$, that is,
\begin{enumerate}
\item[(a)] $(\mj(u_n))$ is bounded;
\item[(b)] $\Big(\mj\!\mid_{\mathcal N}\Big)'(u_n)\to 0$ strongly in $H^{-1}(\R^N)$.
\end{enumerate}
Then, there exists a solution $u\in H^1(\R^N)$ of \eqref{ssp} such that replacing $(u_n)$ with a subsequence the following alternative holds
\smallskip

\noindent (1) either $u_n\to u$ strongly in $H^1(\R^N)$;

or
\smallskip

\noindent (2) $u_n\rightharpoonup u$ weakly (but not strongly) in $H^1(\R^N)$ and there exists 
a positive integer $k\geq 1$, $k$ functions $u_1,u_2,\dots, u_k\in H^1(\R^N)$ which are nontrivial  weak solutions to \eqref{gstate} and $k$ sequence of points $(y_{n,1})$, $(y_{n,2})$, $\dots$, $(y_{n,k})\subset \R^N$
such that:
\begin{enumerate}
\item[(i)] $|y_{n,j}|\to \infty$ and $|y_{n,j}-y_{n,i}|\to \infty$  if $i\neq j$, $n\to \infty$;
\item[(ii)] $\di u_n-\sum_{j=1}^ku_j(\cdot+y_{n,j})\to u$ in $H^1(\R^N)$;
\item[(iii)] $\di \mj(u_n)\to \mj(u)+\sum_{j=1}^k \me(u_j)$;
\end{enumerate}
\end{proposition}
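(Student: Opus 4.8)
The plan is to adapt the iterative concentration-compactness (bubbling) scheme of Struwe and Benci--Cerami, already used for \eqref{sp0} in \cite{CM2016,CV2010}, to the present nonlocal setting by means of the Brezis--Lieb type results of Section \ref{blieb}. First I would upgrade $(u_n)$ to a genuine Palais--Smale sequence for the \emph{free} functional $\mj$: with $\mathcal{G}(u)=\langle\mj'(u),u\rangle$ as in the proof of Proposition \ref{nehari}, the Lagrange multiplier rule yields $\lambda_n\in\R$ with $\mj'(u_n)-\lambda_n\mathcal{G}'(u_n)\to0$ in $H^{-1}(\R^N)$; pairing with $u_n$ and using $\langle\mathcal{G}'(u_n),u_n\rangle<-2(q-1)C_0<0$ from \eqref{cnot}--\eqref{cnot1} forces $\lambda_n\to0$, hence $\mj'(u_n)\to0$ in $H^{-1}(\R^N)$. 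Boundedness of $(u_n)$ in $H^1(\R^N)$ then follows from $\mj(u_n)-\tfrac{1}{2p}\langle\mj'(u_n),u_n\rangle=\big(\tfrac12-\tfrac1{2p}\big)\|u_n\|^2+\big(\tfrac1{2q}-\tfrac1{2p}\big)\intr K(x)\phi_{u_n}|u_n|^q$, whose right-hand side is a sum of nonnegative terms because $p>q>1$.

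Passing to a subsequence, $u_n\rightharpoonup u$ in $H^1(\R^N)$, $u_n\to u$ in $L^s_{loc}(\R^N)$ for $2\le s<2^*$ and a.e., while Lemma \ref{phiu}(iii) gives $\phi_{u_n}\to\phi_u$ strongly in $\mathcal{D}^{1,2}(\R^N)$. Using Lemma \ref{anbl} for the Choquard term, together with the strong convergence of $\phi_{u_n}$ and Lemma \ref{bogachev} for the Schr\"odinger--Poisson term (here \eqref{k} and H\"older's inequality put all relevant products in dual Lebesgue spaces), one passes to the limit in $\langle\mj'(u_n),\varphi\rangle\to0$ and concludes that $u$ is a weak solution of \eqref{ssp}. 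If $u_n\to u$ strongly in $H^1(\R^N)$ we are in alternative (1); so assume henceforth that strong convergence fails.

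Now I would run the extraction. Put $v_n^1=u_n-u\rightharpoonup0$ with $\limsup_n\|v_n^1\|>0$. From Lemma \ref{blc}, Lemma \ref{nlocbl}, and the fact that $\intr K(x)\phi_{v_n^1}|v_n^1|^q\to0$ (since $v_n^1\rightharpoonup0$ gives $\phi_{v_n^1}\to\phi_0=0$ in $\mathcal{D}^{1,2}(\R^N)$ by Lemma \ref{phiu}(iii), while $K\in L^r(\R^N)$ with $r$ as in \eqref{k}), one obtains $\|v_n^1\|^2=\|u_n\|^2-\|u\|^2+o(1)$, $\mj(u_n)=\mj(u)+\me(v_n^1)+o(1)$, and, using also that the $K$-term of $\mj'$ does not survive translations going to infinity, $\me'(v_n^1)\to0$ in $H^{-1}(\R^N)$ (the nonlocal splitting of the derivative is built from Lemma \ref{blc} and \eqref{hls1}--\eqref{hls2} as in the proof of Lemma \ref{anbl}). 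Since $\|v_n^1\|\not\to0$ and $\me'(v_n^1)\to0$, Lemma \ref{cc} (with a fixed $s\in(2,2^*)$) excludes vanishing: there are $\delta>0$ and $y_{n,1}\in\R^N$ with $\int_{B_1(y_{n,1})}|v_n^1|^s\ge\delta$, and necessarily $|y_{n,1}|\to\infty$, for otherwise $v_n^1(\cdot+y_{n,1})\rightharpoonup0$. Along a further subsequence $v_n^1(\cdot+y_{n,1})\rightharpoonup u_1$, nontrivial by the local compact embedding and the lower bound on $B_1(0)$; translating $\me'(v_n^1)\to0$ (which is precisely the weak form of \eqref{gstate}) and using the weak continuity of the Choquard nonlinearity against compactly supported test functions, one gets that $u_1$ is a nontrivial weak solution of \eqref{gstate}. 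Iterating with $v_n^{j+1}=v_n^j-u_j(\cdot+y_{n,j})$ produces, at each stage, either strong convergence $v_n^{j+1}\to0$ in $H^1(\R^N)$ (and the procedure stops) or a further nontrivial solution $u_{j+1}$ of \eqref{gstate} with $|y_{n,j+1}|\to\infty$, together with the telescoped identities $\|u_n\|^2=\|u\|^2+\sum_{i\le j}\|u_i\|^2+\|v_n^{j+1}\|^2+o(1)$ and $\mj(u_n)=\mj(u)+\sum_{i\le j}\me(u_i)+\me(v_n^{j+1})+o(1)$. Since any nontrivial solution $w$ of \eqref{gstate} satisfies $\me(w)=\big(\tfrac12-\tfrac1{2p}\big)\|w\|^2\ge c>0$ (because $\|w\|^2=\intr(\ia*|w|^p)|w|^p\le C\|w\|^{2p}$ by \eqref{hls2}), while $\mj(u)\ge0$ (from $\langle\mj'(u),u\rangle=0$ and Lemma \ref{phiu}(i)) and $\me(v_n^{j+1})\ge o(1)$, boundedness of $\mj(u_n)$ forces the process to terminate after finitely many steps $k\ge1$.

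It remains to establish $|y_{n,i}-y_{n,j}|\to\infty$ for $i\ne j$, which I would prove by induction on the extraction stage: each $v_n^j$ satisfies $v_n^j\rightharpoonup0$ and $v_n^i(\cdot+y_{n,i})\rightharpoonup u_i$ for $i<j$ by construction, so if along a subsequence $y_{n,j}-y_{n,i}$ stayed bounded, then translating $v_n^j$ by $y_{n,i}$ and accounting for the already extracted (bounded) shifts would still leave a weak limit equal to the nonzero profile $u_i$, contradicting that the $i$-th bubble was removed at stage $i$ (equivalently, contradicting $v_n^j(\cdot+y_{n,j})\rightharpoonup u_j\ne0$). Once the procedure stops at step $k+1$, $v_n^{k+1}\to0$ strongly, hence $u_n-\sum_{j=1}^k u_j(\cdot+y_{n,j})=v_n^{k+1}+u\to u$ in $H^1(\R^N)$, which is (ii); (iii) is the telescoped energy identity with $\me(v_n^{k+1})\to0$; and $k\ge1$ because strong convergence was assumed to fail. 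The two delicate points — and the ones I expect to cost the most work — are: (a) showing that $v_n^j$ is a genuine approximate critical point of the \emph{free} Choquard functional $\me$, which is exactly where the nonlocal Brezis--Lieb lemmas of Section \ref{blieb} and the hypothesis $K\in L^r(\R^N)$ (used to kill the Schr\"odinger--Poisson contribution along $|y_{n,j}|\to\infty$) enter; and (b) the bookkeeping forcing the translation centres to diverge from one another, so that the decompositions of norms and energies are asymptotically exact. I expect (b), the orthogonality-and-termination argument, to be the principal technical hurdle.
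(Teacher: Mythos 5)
Your proposal is correct and follows essentially the same route as the paper: upgrade the constrained $(PS)$ sequence via the Lagrange multiplier bound \eqref{cnot1}, identify the weak limit as a solution of \eqref{ssp} using Lemma \ref{anbl} and the strong convergence $\phi_{u_n}\to\phi_u$, then run the iterative profile decomposition on $z_{n,1}=u_n-u$ with non-vanishing ruled out by Lemma \ref{cc} and \eqref{hls2}, the bubbles solving \eqref{gstate}, energy splitting from Lemma \ref{nlocbl}, and termination from $\me(u_j)\geq m_\me>0$. You in fact supply several details the paper leaves implicit (boundedness of $(u_n)$, the multiplier argument, the divergence of the interdistances), but the decomposition and key lemmas are the same.
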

\begin{proof}
Because $(u_n)$ is bounded in $H^1(\R^N)$, there exists $u\in H^1(\R^N)$ such that, up to a subsequence, we have
\begin{equation}\label{firstconv}
\left\{
\begin{aligned}
u_n& \rightharpoonup u \quad\mbox{ weakly in }H^1(\R^N),\\
u_n &\rightharpoonup u\quad\mbox{ weakly in }L^s(\R^N),\; 2\leq s\leq 2^*,\\
u_n & \to u\quad\mbox{ a.e. in }\R^N.
\end{aligned}
\right.
\end{equation}
We also need the following result:

\begin{lemma}\label{est1}
We have
\begin{enumerate}
\item[(i)] $\di \intr K(x) \phi_{u_n} |u_n|^q=\intr K(x) \phi_{u} |u|^q+o(1)$;
\item[(ii)] $\di \intr K(x) \phi_{u_n} |u_n|^{q-2}u_nh=\intr K(x) \phi_{u} |u|^{q-2}uh+o(1),\;$ for all $h\in H^1(\R^N)$.
\end{enumerate}
\end{lemma}
\begin{proof} We shall prove only (ii) as the (i) part is similar. 

Note first that
\begin{equation}\label{est3}
\begin{aligned}
\left| \intr \!K(x) \phi_{u_n} |u_n|^{q-2}u_n h-\intr\! K(x) \phi_{u} |u|^{q-2}uh\right|\leq & \intr \! |K(x)||\phi_{u_n}-\phi_u||u_n^{q-1}h|\\
&+ \left| \intr \!K(x) \phi_{u}h(|u_n|^{q-2}u_n-|u|^{q-2}u)\right|.
\end{aligned}
\end{equation}
Using Lemma \ref{phiu}(iii) and H\"older's inequality we find
\begin{equation}\label{est4}
\begin{aligned}
\intr\! |K(x)||\phi_{u_n}-\phi_u||u_n^{q-1}h|& \leq  \|K\|_r\|\phi_{u_n}-\phi_u\|_{2^*}\|u_n^{q-1}\|_{\frac{2^*}{q-1}}\|h\|_{2^*}\\
&=\|K\|_r\|\phi_{u_n}-\phi_u\|_{2^*}\|u_n\|_{2^*}\|h\|_{2^*}=o(1).
\end{aligned}
\end{equation}
By Lemma \ref{bogachev} we have $|u_n|^{q-2}u_n\rightharpoonup |u|^{q-2}u$ weakly in $L^{\frac{2^*}{q-1}}(\R^N)$. 

Since $K(x)\phi_uh\in L^{\frac{2^*}{2^*-(q-1)}}(\R^N)$ it follows that
\begin{equation}\label{est5}
\intr \!K(x) \phi_{u}h(|u_n|^{q-2}u_n-|u|^{q-2}u)=o(1).
\end{equation}
Now, the proof follows by combining \eqref{est3}-\eqref{est5}.
\end{proof}

We now return to the proof of Proposition \ref{compact}. By \eqref{firstconv}, Lemma \ref{anbl} and Lemma \ref{est1}(ii) it follows that $\mj'(u)=0$ so $u\in H^1(\R^N)$ is a solution of \eqref{ssp}.

If $u_n\to u$ strongly in $H^1(\R^N)$ then the first alternative in the statement of Proposition \ref{compact} holds and we are done. Assume in the following that $(u_n)$ does not converge strongly in $H^1(\R^N)$ to $u$ and define
$z_{n,1}=u_n-u$. Then $(z_{n,1})$ converges weakly and not strongly to zero in $H^1(\R^N)$ and
\begin{equation}\label{bl2}
\|u_n\|^2=\|u\|^2+\|z_{n,1}\|^2+o(1).
\end{equation}
By Lemma \ref{nlocbl} we have
\begin{equation}\label{bl3}
\int_{\R^N} (I_\alpha*|u_n|^p)|u_n|^p=\intr (I_\alpha*|u|^p)|u|^p+\intr (I_\alpha*|z_{n,1}|^p)|z_{n,1}|^p+o(1). 
\end{equation}
 Using \eqref{bl2}, \eqref{bl3} and Lemma \ref{est1}(i) we deduce
\begin{equation}\label{est6}
{\mathcal J}(u_n)=\mj(u)+{\mathcal E}(z_{n,1})+o(1).
\end{equation}
For any $h\in H^1(\R^N)$, by Lemma \ref{anbl} and Lemma \ref{est1}(ii) we have
\begin{equation}\label{est7}
\langle \me '(z_{n,1}), h\rangle=o(1).
\end{equation}
Next, by Lemma \ref{nlocbl} and Lemma \ref{est1}(i) we have
$$
\begin{aligned}
0=\langle \mj '(u_n), u_n \rangle&=\langle \mj '(u),u\rangle+\langle \me'(z_{n,1}), z_{n,1} \rangle+o(1)\\
&=\langle \me'(z_{n,1}), z_{n,1}\rangle+o(1),
\end{aligned}
$$
which yields
\begin{equation}\label{est8}
\langle \me '(z_{n,1}), z_{n,1}\rangle=o(1).
\end{equation}
Let
$$
\delta:=\limsup_{n\to \infty}\Big(\sup_{y\in \R^N} \int_{B_1(y)}|z_{n,1}|^{\frac{2Np}{N+\alpha}}\Big)\geq 0.
$$
We claim that $\delta>0$.
Indeed, if $\delta=0$, by Lemma \ref{cc} we deduce $z_{n,1}\to 0$ strongly in $L^{\frac{2Np}{N+\alpha}}(\R^N)$. Then, by Hardy-Littlewood-Sobolev inequality \eqref{hls2} we find
$$
\intr (I_\alpha*|z_{n,1}|^p)|z_{n,1}|^p=o(1).
$$
This fact combined with \eqref{est8} yields $z_{n,1}\to 0$ strongly in $H^1(\R^N)$ in contradiction to our assumption.

Hence, $\delta>0$ so that we may find $y_{n,1}\in \R^N$ with
\begin{equation}\label{est9}
\int_{B_1(y_{n,1})}|z_{n,1}|^{\frac{2Np}{N+\alpha}}>\frac{\delta}{2}.
\end{equation}
Considering the sequence $(z_{n,1}(\cdot+y_{n,1}))$, there exists $u_1\in H^1(\R^N)$ such that, up to a subsequence, we have  
$$
\begin{aligned}
z_{n,1}(\cdot+y_{n,1})&\rightharpoonup u_1\quad\mbox{ weakly in } H^1(\R^N),\\
z_{n,1}(\cdot+y_{n,1})&\to u_1\quad\mbox{ strongly in } L_{loc}^{\frac{2Np}{N+\alpha}}(\R^N),\\
z_{n,1}(\cdot+y_{n,1})&\to u_1\quad\mbox{ a.e. in } \R^N.
\end{aligned}
$$
Passing to the limit in \eqref{est9} we find
$$
\int_{B_1(0)}|u_{1}|^{\frac{2Np}{N+\alpha}}\geq \frac{\delta}{2},
$$
so $u_1\not\equiv 0$. Also, since $(z_{n,1})$ converges weakly to zero in $H^1(\R^N)$  it follows that $(y_{n,1})$ is unbounded. Passing to a subsequence we may assume $|y_{n,1}|\to \infty$. From \eqref{est8} we also obtain $\me'(u_1)=0$, so $u_1$ is a nontrivial solution of \eqref{gstate}.

Set next
$$
z_{n,2}(x)=z_{n,1}(x)-u_1(x-y_{n,1}).
$$
As above we have
$$
\|z_{n,1}\|^2=\|u_1\|^2+\|z_{n,2}\|^2+o(1).
$$
and by Lemma \ref{nlocbl} we derive
$$
\int_{\R^N} (I_\alpha*|z_{n,1}|^p)|z_{n,1}|^p=\intr (I_\alpha*|u_1|^p)|u_1|^p+\intr (I_\alpha*|z_{n,2}|^p)|z_{n,2}|^p+o(1). 
$$
Thus,
$$
\me (z_{n,1})=\me(u_1)+\me(z_{n,2})+o(1)
$$
so, by \eqref{est6} one has
$$
\mj (u_n)=\mj (u)+\me(u_1)+\me(z_{n,2})+o(1).
$$
Using the above techniques one can also derive
$$
\langle \me'(z_{n,2}),h\rangle =o(1)\quad\mbox{ for any }h\in H^1(\R^N)
$$
and
$$
\langle \me'(z_{n,2}),z_{n,2}\rangle =o(1).
$$
If $(z_{n,2})$ converges strongly to zero, the proof finishes (and take $k=1$ in the statement of Proposition \ref{compact}). Assuming that $z_{n,2}\rightharpoonup 0$ weakly and not strongly in $H^1(\R^N)$, we iterate the process. In $k$ number of steps we find a set of sequences $(y_{n,j})\subset \R^N$, $1\leq j\leq k$ with 
$$
|y_{n,j}|\to \infty\quad\mbox{  and }\quad |y_{n,i}-y_{n,j}|\to \infty\quad\mbox{  as }\; i\neq j, n\to \infty
$$
and $k$ nontrivial solutions  $u_1$, $u_2$, $\dots$, $u_k\in H^1(\R^N)$ of \eqref{gstate} such that, denoting 
$$
z_{n,j}(x):=z_{n,j-1}(x)-u_{j-1}(x-y_{n,j-1})\,, \quad 2\leq j\leq k,
$$ 
we have
$$
z_{n,j}(x+y_{n,j})\rightharpoonup u_j\quad\mbox{weakly in }\; H^1(\R^N)
$$
and
$$
\mj(u_n)= \mj(u)+\sum_{j=1}^k \me(u_j)+\me(z_{n,k})+o(1).
$$
Since  $\me(u_j)\geq m_{\me}$ and $(\mj(u_n))$ is bounded, the process can be iterated only a finite number of times. This concludes our proof.
\end{proof}

\begin{cor}\label{corr1} 
Let $c\in (0,m_\me)$. Then, any $(PS)_c$ sequence of $\mj\!\mid_{\mathcal N}$ is relatively compact. 
\end{cor}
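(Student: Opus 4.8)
The plan is to feed a $(PS)_c$ sequence into the dichotomy of Proposition \ref{compact} and then use the constraint $c<m_\me$ to kill its second alternative, leaving only strong convergence.

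First I would check the hypotheses of Proposition \ref{compact}. A $(PS)_c$ sequence $(u_n)\subset\mathcal{N}$ of $\mj\!\mid_{\mathcal N}$ satisfies (b) by definition, and (a) holds because $\mj(u_n)\to c$; note also that, since for $u\in\mathcal{N}$ the proof of Proposition \ref{nehari}(i) gives $\mj(u)\geq(\frac12-\frac{1}{2p})\|u\|^2$, the convergence $\mj(u_n)\to c$ already forces $(u_n)$ to be bounded in $H^1(\R^N)$. Thus Proposition \ref{compact} applies: up to a subsequence there is a solution $u$ of \eqref{ssp} such that either $u_n\to u$ strongly in $H^1(\R^N)$, or the second alternative holds with an integer $k\geq 1$ and nontrivial weak solutions $u_1,\dots,u_k$ of \eqref{gstate}.

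Next I would rule out the second alternative. Combining $\mj(u_n)\to c$ with Proposition \ref{compact}(2)(iii) yields $c=\mj(u)+\sum_{j=1}^k\me(u_j)$. Each $u_j\not\equiv 0$ solves \eqref{gstate}, hence $u_j\in{\mathcal N}_{\mathcal E}$ and $\me(u_j)\geq m_\me$. For the term $\mj(u)$: if $u\equiv 0$ then $\mj(u)=0$, and if $u\not\equiv 0$ then $\mj'(u)=0$ gives $u\in\mathcal{N}$, so by the identity
$$
\mj(u)=\Big(\frac12-\frac{1}{2p}\Big)\|u\|^2+\Big(\frac{1}{2q}-\frac{1}{2p}\Big)\intr K(x)\phi_u |u|^q
$$
from the proof of Proposition \ref{nehari}(i) together with $p>q>1$, $K\geq 0$ and $\phi_u\geq 0$, we get $\mj(u)>0$; in either case $\mj(u)\geq 0$. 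Therefore $c\geq k\,m_\me\geq m_\me$, contradicting $c<m_\me$. Hence the second alternative is impossible, so the first one holds: $u_n\to u$ strongly in $H^1(\R^N)$ along a subsequence. Since the same reasoning applies to every subsequence of $(u_n)$, the sequence is relatively compact.

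The argument is short once Proposition \ref{compact} is available — it is pure energy bookkeeping. The only point needing a little attention, and the nearest thing to an obstacle, is ensuring that the weak limit $u$ does not subtract energy, i.e.\ that $\mj(u)\geq 0$ even when $u\not\equiv 0$; this is exactly the content of the Nehari splitting of $\mj$ displayed above, whose two coefficients are positive precisely because $p>q>1$.
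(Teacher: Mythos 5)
Your argument is correct and follows exactly the paper's route: feed the $(PS)_c$ sequence into Proposition \ref{compact} and exclude alternative (2) because the energy identity would force $c\geq m_\me$. The only difference is that you spell out the (true and needed) inequality $\mj(u)\geq 0$ for the weak limit via the Nehari splitting, which the paper leaves implicit.
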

\begin{proof}
Let $(u_n)$ be a $(PS)_c$ sequence of $\mj\!\mid_{\mathcal N}$. Since $\me(u_j)\geq m_\me$ in Proposition \ref{compact}, it follows that up to a subsequence $u_n\to u$ strongly in $H^1(\R^N)$ and $u$ is a solution of \eqref{ssp}. 
\end{proof}

\subsection{Proof of Theorem \ref{existence} completed}

The proof of Theorem \ref{existence} relies essentially on the following result. 

\begin{lemma}\label{rest}
There exists $M>0$ such that if $K\in L^r(\R^N)$ and $\|K\|_r<M$ then
$$
m_{\mj}<m_\me.
$$
\end{lemma}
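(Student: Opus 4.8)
The plan is to test $\mj\!\mid_{\mathcal N}$ against the projection onto ${\mathcal N}$ of a ground state of the limiting Choquard equation \eqref{gstate}, and to show that when $\|K\|_r$ is small this projection has energy strictly below $m_\me$. For $\frac{N+\alpha}{N}<p<\frac{N+\alpha}{N-2}$ equation \eqref{gstate} possesses a ground state $w$ (see \cite{MV2013}); in particular $w\in{\mathcal N}_{\mathcal E}$, $\me(w)=m_\me$, and $\|w\|^2=\intr(\ia*|w|^p)|w|^p=:a$, so that $m_\me=\big(\tfrac12-\tfrac1{2p}\big)a$.

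Since $p>q>1$ there is a unique $t_K>0$ with $t_Kw\in{\mathcal N}$, whence $m_{\mj}\le\mj(t_Kw)$. Put $\Theta_K:=\intr K\phi_w|w|^q\ge 0$ and note that, although $\Theta_K$ depends on $K$, it involves only the fixed profile $w$, because $\phi_w=I_2*(K|w|^q)$ by \eqref{pot}. Using $w\in{\mathcal N}_{\mathcal E}$ together with Lemma~\ref{phiu}(ii), the Nehari identity $\langle\mj'(t_Kw),t_Kw\rangle=0$ reads
$$
t_K^{2p-2}=1+\frac{\Theta_K}{a}\,t_K^{2q-2},
$$
while the value formula on ${\mathcal N}$ (as in the proof of Proposition \ref{nehari}) gives
$$
\mj(t_Kw)=\Big(\tfrac12-\tfrac1{2p}\Big)t_K^2\,a+\Big(\tfrac1{2q}-\tfrac1{2p}\Big)t_K^{2q}\,\Theta_K .
$$
Thus the problem reduces to estimating $\Theta_K$ and the deviation $t_K-1$ in terms of $\|K\|_r$.

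For $\Theta_K$ I would use the Hardy--Littlewood--Sobolev inequality \eqref{hls2} with $\alpha=2$ and $s=t=\tfrac{2N}{N+2}$, followed by Hölder's inequality with the exponents of \eqref{k} and the embedding $H^1(\R^N)\hookrightarrow L^{2^*}(\R^N)$, obtaining
$$
0\le\Theta_K\le C\big\|K|w|^q\big\|_{\frac{2N}{N+2}}^{2}\le C\,\|K\|_r^{2}\,\|w\|_{2^*}^{2q}=C(w)\,\|K\|_r^{2},
$$
with $C(w)$ independent of $K$. Hence $\Theta_K\to 0$, and from the Nehari relation above $t_K\to 1$, as $\|K\|_r\to 0$. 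Substituting these asymptotics into the formula for $\mj(t_Kw)$ and expanding around $t_K=1$ — where $g(t):=\tfrac{t^2}{2}-\tfrac{t^{2p}}{2p}$ has a nondegenerate maximum, so that $\me(t_Kw)=a\,g(t_K)$ departs from $m_\me$ only quadratically in $t_K-1$ — lets one compare $\mj(t_Kw)$ with $m_\me$; choosing $M=M(w,N,p,q,\alpha)$ so that the expansion is conclusive for $\|K\|_r<M$ gives $m_{\mj}\le\mj(t_Kw)<m_\me$.

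The step I expect to be the main obstacle is exactly this final energy comparison. Since $t_K-1$ is itself of order $\Theta_K=O(\|K\|_r^{2})$, the quadratic correction to $\me(t_Kw)$ and the term $\tfrac1{2q}t_K^{2q}\Theta_K$ both enter $\mj(t_Kw)-m_\me$ at the same order and the $\tfrac1{2p}$-contributions largely cancel; a crude bound therefore only yields $m_{\mj}\le m_\me+o(1)$, and to obtain the strict inequality one must carry the exact leading-order coefficients through the expansions of $t_K$ and of $\mj(t_Kw)$ rather than estimating by absolute values. Everything else — existence of the Choquard ground state, the Nehari bookkeeping, and the Hardy--Littlewood--Sobolev/Hölder estimate above — is routine.
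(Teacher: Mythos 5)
Your construction (project the Choquard ground state $w$ onto ${\mathcal N}$, bound $\Theta_K$ via Hardy--Littlewood--Sobolev and H\"older, and expand near $t_K=1$) is exactly the paper's route, and you have correctly located the critical step; but the gap you flag is not a matter of carrying more precise constants --- the comparison simply goes the wrong way. From your own two identities the conclusion is forced: since $\Theta_K\ge 0$, the Nehari relation $t_K^{2p-2}=1+\frac{\Theta_K}{a}t_K^{2q-2}$ gives $t_K\ge 1$, and then the value formula yields
$$
\mj(t_Kw)-m_\me=\Big(\tfrac12-\tfrac1{2p}\Big)\big(t_K^2-1\big)a+\Big(\tfrac1{2q}-\tfrac1{2p}\Big)t_K^{2q}\,\Theta_K\;\ge\;0,
$$
with strict inequality whenever $\Theta_K>0$. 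Equivalently, in the form $\mj(t_Kw)=g(t_K)a+\tfrac{t_K^{2q}}{2q}\Theta_K$ with $g(t)=\tfrac{t^2}{2}-\tfrac{t^{2p}}{2p}$: since $t_K-1=O(\Theta_K)$, the loss $g(1)-g(t_K)$ is $O(\Theta_K^2)$ while the gain term is of order $\Theta_K$, so the two contributions are \emph{not} of the same order, contrary to what you assert; the positive term dominates and no smallness of $\|K\|_r$ can produce $\mj(t_Kw)<m_\me$.

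The obstruction is in fact structural, so no other test function can repair the argument as stated: because $K\ge 0$ implies $\phi_u\ge 0$, one has $\mj(tu)\ge\me(tu)$ for every $u\neq 0$ and $t>0$, and since (for $p>q>1$) both ${\mathcal N}$ and ${\mathcal N}_{\mathcal E}$ consist precisely of the maxima along rays, $m_{\mj}=\inf_{u\neq0}\max_{t>0}\mj(tu)\ge\inf_{u\neq0}\max_{t>0}\me(tu)=m_\me$. You should be aware that the paper's own proof of this lemma is the same computation and concludes abruptly ``since $t>1$, by letting $\|K\|_r$ small,'' i.e.\ it glosses over exactly the step you identified; the sign analysis above shows that this step cannot be completed, and that the claimed strict inequality $m_{\mj}<m_\me$ is incompatible with the nonnegativity of the coupling term (one expects instead $m_{\mj}=m_\me$, not attained, by translating $w$ to infinity). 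So your instinct about where the difficulty lies is sound, but the proposed fix --- exact leading-order bookkeeping --- establishes the opposite inequality rather than the lemma.
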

\begin{proof}   
Denote by $w\in H^1(\R^N)$ the ground state solution of \eqref{gstate}. By \cite[Theorem 1]{MV2013} we know that such a ground state exists. Let $tw$ be the projection of $w$ on ${\mathcal N}$, that is, 
$t=t(w)>0$ is the unique real number such that $tw\in {\mathcal N}$ (with ${\mathcal N}$ defined in \eqref{nm}).  Denote
$$
A(w)=\intr K(x) \phi_w |w|^q\,,\quad B(w)=\intr (\ia*|w|^p)|w|^p.
$$
Since $w\in {\mathcal N}_{\mathcal E}$ and $tw\in {\mathcal N}$ we have
\begin{equation}\label{g1}
\|w\|^2=B(w)
\end{equation}
and
$$
t^2\|w\|^2+t^{2q} A(w)=t^{2p}B(w).
$$
From the above equalities we find $t>1$. Further, by H\"older inequality we have
\begin{equation}\label{g2}
A(w)\leq \|K\|_r\|\phi_w\|_{2^*}\|w\|^q_{2^*}.
\end{equation}
From \eqref{g1} and \eqref{g2} we deduce
$$
\begin{aligned}
m_{\mj}&\leq \mj(tw)=\frac{1}{2}t^2\|w\|^2+\frac{1}{2q}t^{2q} A(w)-\frac{1}{2p}t^{2p}B(w)\\
&=\Big(\frac{t^2}{2}-\frac{t^{2p}}{2p}\Big)\|w\|^2+\frac{t^{2q}}{2q} \|K\|_r\|\phi_w\|_{2^*}\|w\|^q_{2^*}.
\end{aligned}
$$
Since $t>1$, by letting $\|K\|_r$ small, it follows that
$$
m_{\mj}<\Big(\frac{1}{2}-\frac{1}{2p}\Big)\|w\|^2=\me(w)=m_{\me}.
$$
\end{proof}

By Ekeland Variational Principle, for any $n\geq 1$ there exists $u_n\in {\mathcal N}$ such that
$$
\begin{aligned}
\mj(u_n)&\leq m_{\mj}+\frac{1}{n}&&\quad\mbox{ for all }n\geq 1,\\
\mj(u_n)&\leq \mj(v)+\frac{1}{n}\|v-u_n\| &&\quad\mbox{ for all } v\in {\mathcal N}, n\geq 1.
\end{aligned}
$$
From here we easily deduce that $(u_n)\subset {\mathcal N}$ is a $(PS)_{m_{\mj}}$ sequence for $\mj\!\mid_{\mathcal N}$. Using Lemma \ref{rest} and Corollary \ref{corr1} it follows that up to a subsequence, $(u_n)$ converges strongly to some u$\in H^1(\R^N)$ which is a ground state solution of $\mj$.

\end{document}